\newenvironment{customthm}[1]
  {\innercustomthm}
  {\endinnercustomthm}
\DeclareMathAlphabet{\mathbbold}{U}{bbold}{m}{n}
\title{{\larger\larger D}egenerations of toric varieties over valuation rings}
\author{{\larger T}{\smaller yler}\ \ {\larger F}{\smaller oster}\ \ \&\ \ {\larger D}{\smaller hruv}\ \ {\larger R}{\smaller anganathan}}
\date{\today}
\address{{\bf Tyler Foster}\newline Department of Mathematics, University of Michigan\\ 2074 East Hall 530 Church Street Ann Arbor, MI  48109-1043}
\email{tyfoster@umich.edu}
\address{{\bf Dhruv Ranganathan}\newline Department of Mathematics, Yale University\\ 10 Hillhouse Avenue, New Haven, CT 06511}
\email{dhruv.ranganathan@yale.edu}
\newtheorem{theorem}{Theorem}[subsection]
\newtheorem{lemma}[theorem]{Lemma}
\newtheorem{proposition}[theorem]{Proposition}
\newtheorem{definition}[theorem]{Definition}
\newtheorem{example}[theorem]{Example}
\newtheorem{quasi-theorem}[theorem]{Quasi-Theorem}
\newtheorem{blank remark}[theorem]{}
\newtheorem{rem1}[theorem]{Remark}
\newenvironment{remark}{\begin{rem1}\em}{\end{rem1}}
\newtheorem{not1}[theorem]{Notation}
\newcommand{\CC} {{\mathbb C}}
\newcommand{\PP}{\mathbb{P}}         
\newcommand{\RR} {{\mathbb R}}		
\newcommand{\ZZ} {{\mathbb Z}}
\newcommand{\Hom}{\operatorname{Hom}}
\DeclareMathOperator{\spec}{Spec}
\DeclareMathOperator{\red}{{red}}
\def\fH{\mathfrak{H}}
\def\fp{\mathfrak{p}}
\def\trop{\mathrm{trop}}
\newcommand{\Spec}{\operatorname{Spec}}
\def\lra{\longrightarrow}
\newcommand{\mono}{\!\xymatrix{{}\ar@{^{(}->}[r]&{}}\!}
\newcommand{\tinyge}{\mbox{{\smaller\smaller\smaller\smaller\smaller $\ge$}}}
\newcommand{\tyler}[1]{{\color{red}\bf [#1\ \ \textemdash\ \ Tyler]}}
\newif\ifshow
\newsavebox{\foobox}
\newcommand{\slantbox}[2][.5]
  {%
    \mbox
      {%
        \sbox{\foobox}{#2}%
        \hskip\wd\foobox
        \pdfsave
        \pdfsetmatrix{1 0 #1 1}%
        \llap{\usebox{\foobox}}%
        \pdfrestore
      }%
  }
\subjclass[2010]{14M25 and 14T05}
\begin{document}
\pagestyle{plain}
\maketitle

\vspace{-0.25in}

\begin{abstract}
We develop a theory of multi-stage degenerations of toric varieties over finite rank valuation rings, extending the Mumford--Gubler theory in rank one. Such degenerations are constructed from fan-like structures over totally ordered abelian groups of finite rank. Our main theorem describes the geometry of successive special fibers in the degeneration in terms of the polyhedral geometry of a system of recession complexes associated to the fan.
\end{abstract}

\setcounter{section}{0}
\setcounter{subsection}{0}

\section{Introduction}

In~\cite{KKMSD,Mum72}, Mumford describes how a rational polyhedral complex in a vector space gives rise to a degeneration of a toric variety over a discrete valuation ring. In~\cite{Gub13}, Gubler extends this theory to degenerations of toric varieties over arbitrary rank-$1$ valuation rings and Gubler and Soto~\cite{GS13} use these results to classify toric schemes over rank-$1$ valuation rings. These degenerations are a crucial ingredient in tropical geometry and have, for instance, been applied to study the enumerative geometry of toric varieties~\cite{NS06}. Recently, Payne and the first author use Gubler models to give a new description of the Huber analytification of a variety, as an inverse limit of adic tropicalizations~\cite{F15,FP15}.

The purpose of this article is to extend the theory of toric degenerations of toric varieties to valuation rings that have rank greater than one. Such multi-stage degenerations will play an important role in the theory of Hahn analytifications and higher rank tropicalizations being developed by the authors, as recently introduced in~\cite{FR1}.
	
\subsection{Rank-$1$ degenerations of toric varieties}

	Let $K$ be a field complete with respect to a nontrivial non-Archimedean valuation $v:K^{\times}\to\RR$. Let $R$ and $\widetilde K$ denote the valuation ring and residue field, respectively. The central construction of Gubler's theory associates to each {\em complete $\Gamma$-admissible fan} $\Sigma$ in $N_{\RR}\times\RR_{\ge0}$ (see \cite[Sections 6 and 7]{Gub13}) an $R$-scheme $\mathscr{Y}\!(\Sigma)$. One of the central results of \cite{Gub13} describes the geometry of $\mathscr{Y}\!(\Sigma)$ in terms of that of $\Sigma$:

\begin{theorem}\label{theorem: Gubler's Theorem}
{\bf (Gubler \cite{Gub13}).} For each complete {\em $\Gamma$-admissible fan $\Sigma$} in $N_{\RR}\times\RR_{\ge0}$, the $R$-scheme $\mathscr{Y}\!(\Sigma)$ is flat and proper over $R$, and satisfies the following:
	\begin{itemize}
	\item[{\bf (i)}]
	The reduced special fiber $\mathscr{Y}\!(\Sigma)^{\red}_{\widetilde{K}}$ is a collection of proper toric $\widetilde{K}$-varieties glued equivariantly along torus-invariant strata.
	\item[{\bf (ii)}]\vskip .2cm
	The irreducible components of $\mathscr{Y}\!(\Sigma)^{\red}_{\widetilde{K}}$ are in natural bijection with the vertices of the $\Gamma$-rational polyhedral complex $\Sigma\cap\big(N_{\RR}\!\times\!\{1\}\big)$ inside $N_{\RR}\!\times\!\{1\}\cong N_{\RR}$. The reduced irreducible component associated to a vertex is equivariantly isomorphic to the toric variety associated to the star of the vertex. 
	\item[{\bf (iii)}]\vskip .2cm
	The generic fiber $\mathscr{Y}\!(\Sigma)_{K}$ is naturally isomorphic to the proper toric $K$-variety associated to the fan $\Sigma\cap\big(N_{\RR}\!\times\!\{0\}\big)$ inside $N_{\RR}\!\times\!\{0\}\cong N_{\RR}$.
	\end{itemize}
If $K$ is algebraically closed, then the special fiber is reduced. 	
\end{theorem}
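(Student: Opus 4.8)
\emph{Construction and flatness.} My plan is to build $\mathscr{Y}(\Sigma)$ chart by chart over $R$ and then read off everything by base change and polyhedral combinatorics. For each cone $\sigma\in\Sigma$ let $\sigma^\vee\subseteq M_\RR\times\RR$ be the dual cone and set $S_\sigma=\sigma^\vee\cap(M\times\Gamma)$. Fixing for each $a\in\Gamma$ an element $\varpi^a\in K^\times$ with $\val(\varpi^a)=a$, let $A_\sigma\subseteq K[M]$ be the $R$-submodule spanned by the monomials $\varpi^a\chi^m$ with $(m,a)\in S_\sigma$. The $\Gamma$-admissibility of $\Sigma$ is exactly what makes $A_\sigma$ a subalgebra of $K[M]$ with the expected finiteness behaviour and makes $A_\sigma$ a localization of $A_\tau$ whenever $\tau\preceq\sigma$, so the $\Spec A_\sigma$ glue to an $R$-scheme $\mathscr{Y}(\Sigma)$. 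Writing $a_m=\min\{a\in\Gamma:(m,a)\in S_\sigma\}$, the module $A_\sigma$ is \emph{free} over $R$ on $\{\varpi^{a_m}\chi^m : m\in\pr_M(S_\sigma)\}$; freeness immediately gives flatness of $\mathscr{Y}(\Sigma)$ over $R$.

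\emph{Generic fiber and properness.} Inverting $\varpi$ collapses the valuation data: $A_\sigma\otimes_R K$ is the span of the $\chi^m$ with $m\in\pr_M(S_\sigma)$, and $\pr_M(S_\sigma)=\tau_\sigma^\vee\cap M$ for the recession cone $\tau_\sigma:=\sigma\cap(N_\RR\times\{0\})$. Hence $\mathscr{Y}(\Sigma)_K$ is the toric $K$-variety of the fan $\{\tau_\sigma\}_\sigma=\Sigma\cap(N_\RR\times\{0\})$, which is (iii); separatedness is the usual toric identity $A_\sigma\cdot A_{\sigma'}=A_{\sigma\cap\sigma'}$ inside $K[M]$. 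For properness I would invoke the valuative criterion: a valuation ring $\cO$ whose fraction field extends $K$, together with a $K$-point of $\mathscr{Y}(\Sigma)$ over $\operatorname{Frac}\cO$, determines a point of $N_\RR\times\RR_{\ge0}$ whose last coordinate records how $\cO$ extends $v$; completeness of $\Sigma$ places this point in some cone $\sigma$, and that cone is precisely the datum of the required extension $\Spec\cO\to\mathscr{Y}(\Sigma)$. Quasicompactness is clear because $\Sigma$ has finitely many cones.

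\emph{The special fiber, and reducedness.} This is the heart of the argument. With $\fm\subseteq R$ the maximal ideal, $A_\sigma/\fm A_\sigma$ has $\widetilde K$-basis $\{\overline{\chi^m}\}_{m\in\pr_M(S_\sigma)}$, and $\overline{\chi^m}\cdot\overline{\chi^{m'}}$ equals $\overline{\chi^{m+m'}}$ when $a_m+a_{m'}=a_{m+m'}$ and is $0$ otherwise. The function $m\mapsto a_m$ is, up to rounding into $\Gamma$, the support function of the polyhedron $P_\sigma:=\sigma\cap(N_\RR\times\{1\})$; it is piecewise linear with regions of linearity the inner normal fan of $P_\sigma$, in natural bijection with the vertices of $P_\sigma$. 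On the cone corresponding to a vertex $w$ the products are honest, so the span of the $\overline{\chi^m}$ supported there is the affine monoid algebra attached to $w$, i.e. the coordinate ring of an affine toric chart of the fan $\operatorname{Star}(w)$. Running this over all cones whose height-$1$ slice contains a given vertex $w$ of $\Sigma\cap(N_\RR\times\{1\})$ and checking compatibility with the gluing of the $\Spec A_\sigma$, one finds that $\mathscr{Y}(\Sigma)^{\red}_{\widetilde K}$ is a union of proper toric $\widetilde K$-varieties — proper because completeness of $\Sigma$ makes each $\operatorname{Star}(w)$ a complete fan — glued equivariantly along torus-invariant strata, with the component of $w$ the toric variety of $\operatorname{Star}(w)$; this is (i) and (ii). When $K=\overline K$ the group $\Gamma$ is divisible, so $m\mapsto a_m$ is already $\Gamma$-valued, no rounding occurs, and $\overline{\chi^m}\cdot\overline{\chi^{m'}}$ vanishes \emph{exactly} when $m,m'$ fail to lie in a common cone of the normal fan. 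Then $A_\sigma/\fm A_\sigma$ is the face algebra of that fan, whose nilradical is the intersection of the prime ideals of the toric components and hence zero, so the special fiber is reduced.

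\emph{The main difficulty.} The delicate step is the special-fiber analysis: pinning down the functions $a_m$ in terms of the polyhedral geometry of the slice, matching the irreducible/primary decomposition of the monoid algebra $A_\sigma/\fm A_\sigma$ with the star fans of the vertices of $\Sigma\cap(N_\RR\times\{1\})$, and verifying that these chart-wise identifications are compatible under the gluing of the $\Spec A_\sigma$ so that the special fiber is genuinely a toric variety assembled equivariantly along torus-invariant strata, not merely fiberwise such a union. The rounding phenomenon in the non-divisible case, and its disappearance when $K$ is algebraically closed, is exactly what controls reducedness.
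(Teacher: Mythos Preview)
This theorem is not proved in the paper: it is quoted from Gubler as background, and the paper's own contribution is the higher-rank Main Theorem, whose rank-$1$ specialization recovers it. Comparing your sketch to \emph{that} proof: the affine constructions agree (the paper's tilted algebra $R[M]^{P}$ for $P=\sigma\cap(N_{\RR}\times\{1\})$ is your $A_{\sigma}$, your $\varpi^{a}$'s amounting to a choice of $R$-basis), and flatness, the generic fiber, and gluing along faces are handled essentially the same way in both. The substantive difference is the special-fiber analysis. You work with the explicit monomial basis and the piecewise-linear function $m\mapsto a_{m}$, decomposing $A_{\sigma}/\fm A_{\sigma}$ according to its linearity domains (the normal fan of $P_{\sigma}$). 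The paper instead introduces a \emph{weight function} $\nu_{P}(f)=\inf_{w\in P}\min_{u}\bigl(\nu(a_{u})+\langle u,w\rangle\bigr)$, proves it is power-multiplicative, uses this to compute $\sqrt{\fm R[M]^{P}}$, and identifies each component with the star of a vertex via the vertex valuations $\nu_{w}$. Both routes reach the same bijection, and both pin reducedness on divisibility of $\Gamma$ (your ``no rounding'' is the paper's ``take $L=1$''). Your argument is more hands-on with monomial bases; the weight-function packaging is what generalizes cleanly to rank $>1$. One caveat worth flagging: your properness step tacitly assumes finite type over $R$, which you have not established --- the paper obtains finite presentation only under the hypothesis $K=\overline{K}$, and defers separatedness to Gubler rather than proving it directly.
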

		
	The central result of the present article is an extension of this theorem to degenerations of toric varieties over arbitrary finite rank valuation rings, as we now explain.

\subsection{Brief reminder on finite rank valuation rings}\label{subsection: finite rank valuation rings}
	In order to state our main result, we briefly review the basic structure of finite rank valuation rings. For each nonnegative integer $k$, let $\RR^{(k)}$ denote the totally ordered abelian group given by $\RR^{k}$ with its lexicographic order. Let $K$ be a field equipped with a valuation $v:K^{\times}\to\RR^{(k)}$, and let $R$ denote the valuation ring $R:=\{0\}\cup\{a\in K^{\times}:v(a)\ge0\}$. The value group $\Gamma:=v(K^{\times})$ in $\RR^{(k)}$ inherits the structure of a totally ordered abelian group from the ambient total ordering on $\RR^{(k)}$. Recall that $n=\mathrm{rank}_{\ \!}\Gamma$ is the length of $\Gamma$'s maximal tower convex subgroups (\S\ref{subsection: Hahn embeddings})
	\begin{equation}\label{equation: convex subgroups in value group}
	\{0\}=\Delta_{0}\ \subset\ \Delta_{1}\ \subset\ \cdots\ \subset\ \Delta_{n}=\Gamma.
	\end{equation}
Each convex subgroup $\Delta_{i}\subset\Gamma$ determines a prime ideal $\mathfrak{p}_{i}:=\{a\in K:\ \!\forall\delta\in\Delta_{i},\ \!v(a)>\delta\}$ in $R$. Each $\Delta_{i}$ also determines a new valuation $v_{i}:K^{\times}\xrightarrow{v}\Gamma\twoheadrightarrow\Gamma/\Delta_{i}$, and the corresponding valuation rings $R_{i}:=\{a\in K:v_{i}(a)\ge0\}$ form an ascending tower
	$$
	R=R_{0}\subset R_{1}\subset\cdots\subset R_{n}=K.
	$$
For each $1\le i\le n$, the ideal $\mathfrak{p}_{i}$ in $R$ is in fact the unique maximal ideal inside $R_{i}$. The {\em $i^{\mathrm{th}}$ intermediate residue field of $K$} is the quotient
	$$
	\widetilde{K}_{i}\ :=\ R_{i}/\mathfrak{p}_{i}.
	$$
In particular, $\widetilde{K}_{n}\cong K$. For further details concerning valued fields of higher rank, see \cite{EP05}.
	
\subsection{Gubler models over finite rank valuation rings}
	The decreasing, length-$1$ tower $\RR_{\ge0}\supset\{0\}$ plays a central role in Gubler's theory of integral models. As we explain in Section \ref{subsection: flag of endomorphisms} below, our choice of order-preserving embedding $\Gamma\hookrightarrow \RR^{(k)}$ determines a decreasing, length $n=\mathrm{rank}_{\ \!}\Gamma$ tower of definable subsets in $\RR^{(k)}$:
	\begin{equation}\label{equation: subtower}
	\mathcal{E}=\mathcal{E}_{0}\ \supset\ \mathcal{E}_{1}\ \supset\ \cdots\ \supset\ \mathcal{E}_{n}.
	\end{equation}
This tower (\ref{equation: subtower}) generalizes the length-$1$ tower $\RR_{\ge0}\supset\{0\}$.
	
	Fix a $\ZZ$-lattice $M$. In Section~\ref{section: polyhedral geometry over Hahn embeddings}, we introduce {\em complete $\Gamma$-admissible fans} $\Sigma$ inside the product $\Hom_{\ZZ}(M,\RR^{k})\!\times\!\mathcal{E}$, and we use the tower (\ref{equation: subtower}) to produce a collection of {\em recession complexes} $\mathrm{rec}_{i}(\Sigma)\subset \Hom_{\ZZ}(M,\RR^{k})$ associated to $\Sigma$, for each $0\le i\le n$. In Section~\ref{section: models associated to polyhedral complexes}, we describe how to construct an $R$-scheme $\mathscr{Y}\!(\Sigma)$ from $\Sigma$, and our main result, which we prove throughout Section~\ref{section: models associated to polyhedral complexes}, is the following theorem.

\begin{customthm}{\!}\label{thm: tropical-theorem}
	For each complete $\Gamma$-admissible fan $\Sigma$ inside $\Hom_{\ZZ}(M,\RR^{(k)})\!\times\!\mathcal{E}$, the $R$-scheme $\mathscr{Y}\!(\Sigma)$ is flat and proper over $R$, and satisfies the following properties:
	\begin{itemize}
	\item[{\bf (i)}]
	For each $0\le i\le n$, the reduced intermediate fiber $\mathscr{Y}\!(\Sigma)^{\red}_{\widetilde{K}_{i}}\cong\big(\mathscr{Y}\!(\Sigma)\!\otimes_{R}\! R_{i}\big)^{\red}_{\widetilde{K}_{i}}$ is a collection of toric $\widetilde{K}_{i}$-varieties glued equivariantly along their torus-invariant strata.
	\item[{\bf (ii)}]\vskip .2cm
	The irreducible components of $\mathscr{Y}\!(\Sigma)^{\red}_{\widetilde{K}_{i}}$ are in natural bijection with the vertices of $\mathrm{rec}_{i}(\Sigma)$. The reduced irreducible component corresponding to a vertex is equivariantly isomorphic to the toric variety of the star of that vertex. 
	\item[{\bf (iii)}]\vskip .2cm
	The generic fiber $\mathscr{Y}\!(\Sigma)_{K}$ is the toric $K$-variety associated to $\mathrm{rec}_{n}(\Sigma)$.
	\end{itemize}
If $K$ is algebraically closed, then every intermediate fiber is reduced. 	
\end{customthm}

\begin{remark}
	The primary accomplishment of the present paper is to provide a polyhedral framework for $\Gamma$-admissible fans and their recession complexes in the context of higher rank toric degenerations. Once the machinery is in place, the proof of the Main Theorem is similar to the proof of Theorem~\ref{theorem: Gubler's Theorem}.
	
	One important source of higher rank toric degenerations is higher rank tropicalizations. Let $K$ be a valued field as above, with valuation $K^\times\twoheadrightarrow \Gamma\hookrightarrow \RR^{(k)}$. Let $X$ be a subvariety of a torus $T$ over $K$. In~\cite{FR1}, building on work of Aroca~\cite{Aroca10}, Banerjee~\cite{Ban13} and Nisse--Sottile~\cite{NS11}, we define and study the Hahn tropicalization $\trop(X)$ of $X$, which is a subset of $\Hom_\ZZ(M,\RR^{(k)})$. The set $\trop(X)$ admits the structure of a polyhedral complex over $\RR^{(k)}$, as defined in Section \ref{subsection: rational polyhedra} below. Using the constructions in the present paper, one can employ the Hahn tropicalization of $X$ to produce an equivariant compactification of $T$ to a toric variety, along with a multistage degeneration of this toric variety, such that the compactification of $X$ in the degeneration intersects components in intermediate special fibers properly. The rank-$0$ version of this was first studied by Tevelev~\cite{Tev07} and the rank-$1$ version was studied by Luxton--Qu~\cite{LQ11} and Gubler~\cite{Gub13}.
\end{remark}

\subsection{Sumihiro's Theorem in Higher Rank} We conclude with a brief discussion of how one might refine the results of this paper. In the theory of normal toric varieties over fields, one has, in addition to a construction of varieties from fans, a classification that all toric varieties arise by this construction. See, for instance~\cite{Ful93}. Working in Mumford's setting of degenerations over discrete valuation rings, one can use the fan that toric varieties are canonically defined over $\ZZ$ to obtain a similar classification result~\cite{KKMSD}. Over more general rank-$1$ valuation rings, the result was only recently proved by Gubler and Soto~\cite{GS13}, building on earlier work of Gubler~\cite{Gub13}. It would be interesting to have such a classification result over higher rank as well, to complete the picture. 

The classification theorem is proved by first proving a combinatorial classification for affine toric varieties and then using Sumihiro's theorem to pass to the general case. The latter states that any point in a normal toric variety is contained in an invariant affine open. The techniques of Gubler and Soto rely on an approximation technique which eventually allows one to reduce to the Noetherian case, and this strategy does not seem immediately applicable in higher rank. Indeed, even a valuation ring whose value group is $\ZZ^{(k)}$ is not Noetherian. We leave Sumihiro's theorem and the corresponding classification result as avenues for future investigation. 
	
\subsection*{Acknowledgements} We are especially grateful to Sam Payne for guidance and encouragement throughout the project. We have benefited from conversations with friends and colleagues, including Dan Abramovich, Matt Baker, Dan Corey, Walter Gubler, Max Hully, and Jeremy Usatine. T.F. was partially supported by NSF RTG grant DMS-09343832. D.R. was supported by NSF CAREER DMS-1149054 (PI: Sam Payne) and acknowledges ideal working conditions at Brown University during the spring and summer terms in 2015. We thank the anonymous referee for a number of very helpful comments that improved the paper. 

\section{Structure of value groups}\label{section: structure of value groups}

\subsection{Hahn embeddings}\label{subsection: Hahn embeddings}
Fix a totally ordered abelian group $\Gamma$.
	
	For any $\gamma\in \Gamma$, either $\gamma$ or $-\gamma$ is greater than $0$. Let $|\gamma|$ denote the larger of the two elements $\gamma$ and $-\gamma$. An element $\gamma'\in\Gamma$ is \textit{infinitely larger} than $\gamma$ if $m|\gamma|<\gamma'$ for every positive integer $m$. Elements $\gamma$ and $\gamma'$ are \textit{Archimedean equivalent} if neither $\gamma$ nor $\gamma'$ is infinitely larger than the other, and this defines an equivalence relation on $\Gamma$.

	A subgroup $\Delta$ of $\Gamma$ is called \textit{convex} if for each element $\delta\in \Delta$, any element $\gamma\in \Gamma$ satisfying $0\leq \gamma \leq \delta$ belongs to $\Delta$. Each convex subgroup is uniquely expressible as a union of Archimedean equivalence classes. The collection of nontrivial convex subgroups of $\Gamma$ is totally ordered by containment, and the {\em rank} of $\Gamma$ is the order type of the set of nonempty convex subgroups $\Gamma$. By a seminal theorem of Hahn~\cite{Hahn}, every finite rank totally ordered abelian group admits an order-preserving embedding
	\begin{equation}\label{equation: Hahn embedding}
	\Gamma\mono\RR^{(k)}
	\end{equation}
for some integer $k\ge0$, where $\RR^{(k)}$ denotes the additive group $\RR^{k}$ equipped with its lexicographic order. We refer to (\ref{equation: Hahn embedding}) as a {\em Hahn embedding}.
	
	For each $j\le k$, there is a unique order-preserving inclusion $\RR^{(j)}\hookrightarrow\RR^{(k)}$ whose image is a convex subgroup of $\RR^{(k)}$, namely the inclusion taking $(r_{1},\dots,r_{j})\mapsto(0,\dots,0,r_{1},\dots,r_{j})$. The steps in the resulting tower
	\begin{equation}\label{equation: convex subgroups of R^(k)}
	\{0\}\hookrightarrow\RR\hookrightarrow\RR^{(2)}\hookrightarrow\cdots\hookrightarrow\RR^{(k)}
	\end{equation}
are in bijection with the convex subgroups of $\RR^{(k)}$, and $\mathrm{rank}_{\ \!}\RR^{(k)}=k$.

\begin{lemma}\label{lemma: all embeddings are strict}
Fix a Hahn embedding (\ref{equation: Hahn embedding}), and let $\Delta$ be a convex subgroup of $\Gamma$. Then there exists a $0\leq j \leq k$ such that $\Delta=\RR^{(j)}\cap \Gamma$, where $\RR^{(j)}\subset \RR^{(k)}$ is identified with the subgroup whose nonzero entries lie in the last $j$ coordinates. 
\end{lemma}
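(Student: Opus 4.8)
The plan is to show that the convex subgroups of $\Gamma$ are exactly the intersections $\RR^{(j)} \cap \Gamma$ by leveraging the fact, already recalled in the excerpt, that the convex subgroups of $\RR^{(k)}$ are precisely the members of the tower $\{0\} \subset \RR \subset \cdots \subset \RR^{(k)}$, and that convex subgroups of $\Gamma$ and of $\RR^{(k)}$ are each totally ordered by inclusion.

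First I would record the two easy directions. One is that for each $0 \le j \le k$, the subgroup $\RR^{(j)} \cap \Gamma$ is convex in $\Gamma$: this is immediate because $\RR^{(j)}$ is convex in $\RR^{(k)}$ and the order on $\Gamma$ is the restriction of the order on $\RR^{(k)}$, so if $0 \le \gamma \le \delta$ in $\Gamma$ with $\delta \in \RR^{(j)} \cap \Gamma$, then $\gamma \in \RR^{(j)}$ by convexity in $\RR^{(k)}$ and $\gamma \in \Gamma$ by hypothesis. The other direction — that \emph{every} convex $\Delta \subseteq \Gamma$ has this form — is the substance. Given a convex subgroup $\Delta$ of $\Gamma$, consider the \emph{convex hull} of $\Delta$ inside $\RR^{(k)}$, i.e. the smallest convex subgroup of $\RR^{(k)}$ containing $\Delta$; since the convex subgroups of $\RR^{(k)}$ form the finite chain \eqref{equation: convex subgroups of R^(k)}, this hull equals $\RR^{(j)}$ for a unique $j$. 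Then $\Delta \subseteq \RR^{(j)} \cap \Gamma$, and I must show equality.

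For the reverse inclusion I would argue as follows. The key point is that $\Delta$, being convex in $\Gamma$, is a union of Archimedean equivalence classes of $\Gamma$ (as recalled in \S\ref{subsection: Hahn embeddings}), and Archimedean equivalence in $\Gamma$ is the restriction of Archimedean equivalence in $\RR^{(k)}$ since the orders are compatible. Now $\RR^{(j)}$ is the convex hull of $\Delta$, so there is some $\delta \in \Delta$ lying in the ``top'' Archimedean block of $\RR^{(j)}$ — i.e. $\delta$ has a nonzero entry in the $(k-j+1)$-st coordinate; otherwise $\Delta$ would already be contained in $\RR^{(j-1)}$, contradicting minimality of $j$. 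Given any $\gamma \in \RR^{(j)} \cap \Gamma$, the element $|\gamma|$ lies in $\RR^{(j)}$, hence is Archimedean-dominated by (or Archimedean equivalent to) some multiple of $|\delta|$: concretely $|\gamma| \le m|\delta|$ for some positive integer $m$, because both lie in $\RR^{(j)}$ and $\delta$ reaches the top block of $\RR^{(j)}$. Since $m\delta \in \Delta$ and $\Delta$ is convex in $\Gamma$, and $0 \le |\gamma| \le m|\delta|$ with $|\gamma| \in \Gamma$, convexity forces $|\gamma| \in \Delta$, hence $\gamma \in \Delta$. This gives $\RR^{(j)} \cap \Gamma \subseteq \Delta$ and completes the proof.

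The main obstacle is the claim that an element $\gamma$ of $\RR^{(j)}$ is bounded by an integer multiple of a fixed $\delta \in \RR^{(j)}$ whose top coordinate is nonzero. This is where the lexicographic structure does the work: if $\delta$ has nonzero first coordinate (after restricting attention to $\RR^{(j)}$), then for any $\gamma \in \RR^{(j)}$ one can choose $m$ large enough that $m$ times that leading coordinate dominates the leading coordinate of $\gamma$, making $m|\delta| \ge |\gamma|$ in the lex order regardless of lower coordinates. One subtlety to handle cleanly: this uses that the relevant convex subgroup $\RR^{(j)}$ of $\RR^{(k)}$ really is ``$j$-dimensional lexicographic,'' so that having a nonzero top coordinate suffices for Archimedean dominance over all of $\RR^{(j)}$; and one should note at the outset that $\Delta$ is nontrivial unless $j = 0$, in which case the statement is the trivial $\Delta = \{0\} = \RR^{(0)} \cap \Gamma$. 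With these points addressed the argument is routine.
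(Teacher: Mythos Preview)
Your proposal is correct and follows essentially the same approach as the paper's proof: both identify a ``top'' element $\delta$ of $\Delta$ (the paper calls it $\gamma_{i}$, the largest representative among the Archimedean classes comprising $\Delta$), observe that the relation ``not infinitely larger than $\delta$'' is the same whether computed in $\Gamma$ or in $\RR^{(k)}$, and conclude that $\Delta$ equals the intersection of $\Gamma$ with the convex subgroup of $\RR^{(k)}$ generated by $\delta$. Your write-up is more explicit about the mechanism (the convex-hull language, the lex-order bound $|\gamma|\le m|\delta|$, and the trivial case $j=0$), but the underlying argument is the same.
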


\begin{proof}
Write $\Delta$ as a disjoint union $\Delta\ =\ [0]\sqcup[\gamma_{1}]\sqcup\cdots\sqcup[\gamma_{j}]$ of Archimedean equivalence classes in $\Gamma$, such that $0<\gamma_{1}<\cdots<\gamma_{i}$. Then $\Delta$ is the union of all elements $\gamma\in\Gamma$ {\em not} infinitely larger than $\gamma_{i}$. Similarly, the union of all elements not infinitely larger than $\gamma_{i}$ inside $\RR^{(k)}$ is a convex subgroup $\RR^{(j)}\hookrightarrow\RR^{(k)}$. For each $\gamma\in\Gamma$, the relation ``$\gamma$ is not infinitely larger than $\gamma_{i}$" holds in $\Gamma$ if and only if it holds in $\RR^{(k)}$. Hence $\RR^{(j)}\cap\Gamma=\Delta$.
\end{proof}

\subsection{Flags of endomorphisms inside $\pmb{\RR^{(k)}}$}\label{subsection: flag of endomorphisms}
	Fix a Hahn embedding (\ref{equation: Hahn embedding}) and let (\ref{equation: convex subgroups in value group}) denote the maximal tower of convex subgroups in $\Gamma$ . Coordinatewise multiplication by any vector $\underline{r}=(r_{1},\dots,r_{k})\in\RR^{k}$ defines a homomorphism of abelian groups
	$$
	\varphi_{\underline{r}}:\RR^{(k)}\lra\RR^{(k)}
	\ \ \ \ \ \ \mbox{taking}\ \ \ \ \ 
	(s_{1},\dots,s_{k})\mapsto(r_{1}s_{1},\dots,r_{k}s_{k}).
	$$
Yet not all vectors $\underline{r}$ in $\RR^{k}$ define an order-preserving homomorphism $\varphi_{\underline{r}}$. We define
	\begin{equation}\label{equation: two descriptions of the flag}
	\begin{array}{rcl}
	\mathcal{E}
	&
	:=
	&
	\big\{\ \!\underline{r}\in\RR^{k}\ \!:\ \!\varphi_{\underline{r}}\mbox{ is order-preserving}\ \!\big\}
	\\[6pt]
	&
	=
	&
	\big\{\ \!\underline{r}=(r_{1},\dots,r_{k})\in\RR^{k}\ \!:r_1\geq 0\ \text{and}\ \forall 0\leq i<k,\ r_i=0\ \Rightarrow\ r_{i+1}\geq 0\big\},\!\!\!\!\!\!\!\!\!\!\!\!\!\!
	\end{array}
	\end{equation}
and we equip $\mathcal{E}$ with the subspace topology it inherits from the lexicographic order topology on $\RR^{k}$, i.e., with its subspace topology in $\RR^{(k)}$. The $\mathcal{E}$-action on $\RR^{(k)}$ restricts to a pairing
	\begin{equation}\label{equation: gamma pairing}
	\begin{array}{rcl}
	\Gamma\times\mathcal{E}\!\!
	&
	\lra
	&
	\RR^{(k)}
	\\
	(\gamma,\underline{r})
	&
	\longmapsto
	&
	\!\!\varphi_{\underline{r}}(\gamma).
	\end{array}
	\end{equation}
Composition $\varphi_{\underline{r}}\circ\varphi_{\underline{s}}$ gives $\mathcal{E}$ the structure of a commutative monoid, written multiplicatively, and pointwise addition extends this to the structure of a commutative semiring with multiplicative identity $(1,\dots,1)$ and additive identity $(0,\dots,0)$.
	
By Lemma \ref{lemma: all embeddings are strict}, there is a strictly increasing sequence of integers $0=j_{0}<\cdots<j_{n}\le k$ such that $\RR^{(j_{i})}$ is the convex hull of $\Delta_{i}$ inside $\RR^{(k)}$, and such that $\Delta_{i}=\Gamma\cap\RR^{(j_{i})}$. Define
	$$
	\mathcal{E}_{i}
	\ :=\ 
	\mathcal{E}\ \cap\ \RR^{(k-j_{i})}
	\ \ \ \ \mbox{inside}\ \ \ \ 
	\RR^{(k)},
	$$
and note that the $\mathcal{E}_{i}$'s fit into a strictly decreasing tower (\ref{equation: subtower}). The second description of $\mathcal{E}$ appearing in (\ref{equation: two descriptions of the flag}) shows that each $\mathcal{E}_{i}$ is a definable subset of $\RR^{(k)}$ (see \cite[Section 2.5]{FR1} for a discussion of definablility). Note also that for $i\ge 1$, the set $\mathcal{E}_{i}$ depends on $\Gamma$ and its embedding $\Gamma\hookrightarrow\RR^{(k)}$, whereas $\mathcal{E}_{0}=\mathcal{E}$ regardless of our choice of $\Gamma$ and its Hahn embedding.
	
	The semiring structure on $\mathcal{E}$ restricts to a semiring structure (without unit) on each $\mathcal{E}_{i}$, and $\mathcal{E}_{i}$ acts on $\RR^{(k)}$ by order-preserving endomorphisms that map $\RR^{(k)}$ into the convex subgroup $\RR^{(k-j_{i})}\subset\RR^{(k)}$. Henceforth, we denote elements of $\mathcal{E}_{i}$ with greek letters $\varphi$, $\psi$, etc., undecorated by their corresponding elements $\underline{r}\in\RR^{k}$. We let $\varepsilon_{i}\in\mathcal{E}_{i}$ denote the order-preserving endomorphism $\varepsilon_{i}:\RR^{(k)}\to\RR^{(k)}$ given by
	\begin{equation}\label{equation: explicit projection description}
	\varepsilon_{i}(r_{1},\dots,r_{k-j_{i}},\dots,r_{k})
	\ =\ 
	(r_{1},\dots,r_{k-j_{i}},0,\dots,0).
	\end{equation}
The next lemma follows easily from Lemma \ref{lemma: all embeddings are strict}.

\begin{lemma}\label{lem: embeddedquotients}
For each $0\le i\le k$, there exists a unique order-preserving, injective homomorphism $\Gamma/\Delta_{i}\hookrightarrow\RR^{(k)}$ that makes the following diagram commute
	\begin{equation}\label{equation: important commutative square}
	\begin{aligned}
	\begin{xy}
	(0,0)*+{\ \Gamma\ }="1";
	(15,0)*+{\RR^{(k)}\!\!\!\!\!}="2";
	(0,-12)*+{\ \Gamma/\Delta_{i}\ }="3";
	(15,-12)*+{\RR^{(k)}\!\!\!\!\!}="4";
	{\ar@{^{(}->} "1"; "2"};
	{\ar@{->>} "1"; "3"};
	{\ar@{^{(}->} "3"; "4"};
	{\ar@{->}^{\varepsilon_{i}} "2"; "4"};
	\end{xy}
	\ \ \ \ 
	\end{aligned}
	\end{equation}
\end{lemma}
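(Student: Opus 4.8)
The plan is to write down the map explicitly, verify it has the stated properties, and then get uniqueness for free. Let $\iota\colon\Gamma\hookrightarrow\RR^{(k)}$ denote the chosen Hahn embedding and define $\bar\iota\colon\Gamma/\Delta_{i}\to\RR^{(k)}$ on representatives by $\gamma+\Delta_{i}\mapsto\varepsilon_{i}\bigl(\iota(\gamma)\bigr)$. The first point is well-definedness: by the explicit formula \eqref{equation: explicit projection description}, $\ker\varepsilon_{i}$ is the set of vectors whose first $k-j_{i}$ coordinates vanish, which is precisely the convex subgroup $\RR^{(j_{i})}\subseteq\RR^{(k)}$ appearing in Lemma~\ref{lemma: all embeddings are strict}. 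Since $\Delta_{i}=\Gamma\cap\RR^{(j_{i})}$, we have $\iota(\Delta_{i})\subseteq\ker\varepsilon_{i}$, so $\varepsilon_{i}\circ\iota$ annihilates $\Delta_{i}$ and $\bar\iota$ descends to the quotient; it is then automatically a homomorphism, being the factorization of the homomorphism $\varepsilon_{i}\circ\iota$.

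Injectivity is the same identification run backwards: if $\varepsilon_{i}(\iota(\gamma))=0$ then $\iota(\gamma)\in\ker\varepsilon_{i}=\RR^{(j_{i})}$, hence $\gamma\in\Gamma\cap\RR^{(j_{i})}=\Delta_{i}$ by Lemma~\ref{lemma: all embeddings are strict}, so $\gamma+\Delta_{i}=0$ and $\ker\bar\iota$ is trivial. For the order statement, recall that convexity of $\Delta_{i}$ makes $\Gamma/\Delta_{i}$ a totally ordered group in which every nonnegative class has a nonnegative representative $\gamma_{0}\in\Gamma$; then $\iota(\gamma_{0})\ge0$, and since $\varepsilon_{i}$ lies in $\mathcal{E}$ — and hence is order-preserving by the very definition of $\mathcal{E}$ — we get $\bar\iota(\gamma_{0}+\Delta_{i})=\varepsilon_{i}(\iota(\gamma_{0}))\ge0$. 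Combining this weak order-preservation with the injectivity already established upgrades it to strict order-preservation, since a strictly positive class then maps to an element that is both $\ge0$ and nonzero. Thus $\bar\iota$ is the desired order-embedding.

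Uniqueness is formal: $\Gamma\twoheadrightarrow\Gamma/\Delta_{i}$ is surjective, so commutativity of the square \eqref{equation: important commutative square} forces any candidate $f$ to agree with $\varepsilon_{i}\circ\iota$ after precomposition with the quotient map, hence $f=\bar\iota$.

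I do not expect a genuine obstacle here — the argument is pure bookkeeping, matching the remark that the lemma follows easily from Lemma~\ref{lemma: all embeddings are strict}. The only step that rewards attention is that bookkeeping itself: confirming that the coordinates zeroed by $\varepsilon_{i}$ in \eqref{equation: explicit projection description} cut out exactly the convex hull $\RR^{(j_{i})}$ of $\Delta_{i}$, so that the two invocations of Lemma~\ref{lemma: all embeddings are strict} (for well-definedness and for injectivity) line up correctly; once that identification is pinned down, the homomorphism, injectivity, and order properties are all routine.
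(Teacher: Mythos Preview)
Your proof is correct and is exactly the routine verification the paper has in mind when it says the lemma ``follows easily from Lemma~\ref{lemma: all embeddings are strict}.'' The paper gives no further argument, and your write-up simply spells out the expected details: identifying $\ker\varepsilon_{i}$ with $\RR^{(j_{i})}$ via \eqref{equation: explicit projection description}, invoking $\Delta_{i}=\Gamma\cap\RR^{(j_{i})}$ for well-definedness and injectivity, and using $\varepsilon_{i}\in\mathcal{E}$ for the order claim.
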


\ifshow	
	For each $0<j\le k$, we define the {\em $j^{\mathrm{th}}$ open stratum} $\mathcal{S}^{\circ}_{j}\subset\mathcal{E}_{j}$ to be the complement
	$$
	\mathcal{S}^{\circ}_{j}
	\ :=\ 
	\mathcal{E}_{j}-\mathcal{E}_{j-1}.
	$$
We sometimes refer to $\mathcal{E}_{j}$ as the {\em $j^{\mathrm{th}}$ closed stratum}. Each open stratum $\mathcal{S}^{\circ}_{j}$ becomes a group with respect to composition, with unit $\varepsilon_{j}\in\mathcal{S}^{\circ}_{j}\subset\mathcal{E}_{j}$. This group $\mathcal{S}^{\circ}_{j}$ acts by order-preserving automorphisms on $\RR^{(j)}$.
\fi

\section{Polyhedral geometry over Hahn embeddings}\label{section: polyhedral geometry over Hahn embeddings}

\subsection{$\pmb{\Gamma}$-Rational polyhedra over $\pmb{\RR^{(k)}}$}
\label{subsection: rational polyhedra}
Fix a totally ordered abelian group $\Gamma$ with Hahn embedding $\Gamma\hookrightarrow\RR^{(k)}$, and let $M$ be a lattice with dual $N:=\Hom_{\ZZ}(M,\ZZ)$. We can form the free abelian group	
	$$
	N\otimes_{\ZZ}\RR^{k}\ \cong\ \Hom_{\ZZ}(M,\RR^{k}).
	$$
Identifying $\RR^k$ with the group underlying $\RR^{(k)}$, the group $N\otimes_{\ZZ}\RR^{k}$ inherits a left $\mathcal E_k$ action. Let $N_{\RR^{(k)}}$ denote $N\otimes_{\ZZ}\RR^{k}$ equipped with this left $\mathcal{E}_{k}$-action. Observe that the tower (\ref{equation: convex subgroups of R^(k)}) of convex subgroups of $\RR^{(k)}$ induces a tower of abelian subgroups
	$$
	\{0\}\hookrightarrow N_{\RR}\hookrightarrow N_{\RR^{(2)}}\hookrightarrow\cdots\hookrightarrow N_{\RR^{(k)}},
	$$
with the property that for each $0\le j\le k$, the subsemiring $\mathcal{E}_{j}$ maps $N_{\RR^{(k)}}$ into the subgroup $N_{\RR^{(j)}}\subset N_{\RR^{(k)}}$. Furthermore, one has a canonical pairing
	\begin{equation}\label{equation: M pairing}
	\langle-,-\rangle:M\times N_{\RR^{(k)}}\lra\RR^{(k)}.
	\end{equation}
A {\em $\Gamma$-rational hyperplane}, respectively {\em $\Gamma$-rational halfspace}, is any subset of $N_{\RR^{(k)}}$ of the form
	$$
	\begin{array}{r}
	H^{0}
	\ =\ 
	\big\{v\in N_{\RR^{(k)}}:\langle u,v\rangle=\gamma\big\},\ \!\!
	\\[10pt]
	\mbox{respectively}\ \ 
	H^{\tinyge0}
	\ =\ 
	\big\{v\in N_{\RR^{(k)}}:\langle u,v\rangle\ge\gamma\big\},
	\end{array}
	\ \ \ \ \ \ \ \ \ \ \ \ \ \ \ \ \ \ \ \ \ \ \ 
	$$
for a fixed $u\in M$ and $\gamma\in\Gamma$. We sometimes write $H^{0}_{(u,\gamma)}$ and $H^{\ge0}_{(u,\gamma)}$ when we want to make the pair $(u,\gamma)$ explicit. When $\gamma=0$, we refer to $H^{0}$ and $H^{\tinyge0}$ as a halfspace and hyperplane {\em through the origin}. A {\em $\Gamma$-rational polyhedron} in $N_{\RR^{(k)}}$ is any subset $P\subset N_{\RR^{(k)}}$ that arises as the intersection of finitely many $\Gamma$-admissible halfspaces in $N_{\RR^{(k)}}$. If $P$ is a $\Gamma$-rational polyhedron in $N_{\RR^{(k)}}$ of the form $P=H^{\tinyge0}_{1}\cap\cdots\cap H^{\tinyge0}_{m}$, then a (non-empty) {\em face} of $P$ is the intersection obtained upon replacing any subset of the halfspaces $H^{\tinyge0}_{\ell}$ with their corresponding hyperplanes $H^{0}_{\ell}$. Every face of $P$ is itself a $\Gamma$-rational polyhedron in $N_{\RR^{(k)}}$.
	
	A {\em $\Gamma$-rational polyhedral complex} $\mathcal{P}$ in $N_{\RR^{(k)}}$ is any finite collection of $\Gamma$-rational polyhedra in $N_{\RR^{(k)}}$ satisfying the following two conditions:
	\begin{itemize}
	\item[\textbf{ (i)}]
	For each $P\in\mathcal{P}$, every face of $P$ is in $\mathcal{P}$;
	\item[\textbf{ (ii)}]\vskip .2cm
	For any $P_{1},P_{2}\in\mathcal{P}$, the intersection $P_{1}\cap P_{2}$ is a $\Gamma$-rational polyhedron in $\mathcal{P}$.
	\end{itemize}
	
	If $P$ is a $\Gamma$-rational polyhedron in $N_{\RR^{(k)}}$, then a {\em flag of faces  in $P$} is any tower of strict inclusions of (non-empty) faces
	$
	P_{0}\subsetneq\ P_{1}\subsetneq\cdots\subsetneq\ P_{m}=P
	$.
We refer to the nonnegative integer $m$ as the {\em rank} of the flag. The {\em dimension} of $P$ is the maximum rank of a flag of faces in $P$. A $0$-dimensional face of $P$ is called a {\em vertex}. A $1$-dimensional face is called an {\em edge}.

\begin{remark}
{\bf Largest linear subspaces and pointed quotients.}
	If we work over the {\em trivial Hahn embedding} $\{0\}\hookrightarrow\RR^{(k)}$, then our combinatorial geometry becomes that of $\{0\}$-rational polyhedra and their complexes. A {\em linear subspace} of $N_{\RR^{(k)}}$ is any finite intersection of $\{0\}$-rational hyperplanes in $N_{\RR^{(k)}}$. Because each $\{0\}$-rational hyperplane takes its complete determination from the vector $u\in M$ to which it is dual, each linear subspace $V\subset N_{\RR^{(k)}}$ is of the form
	$$
	V
	\ =\ 
	\big\{v\in N_{\RR^{(k)}}:\langle u,v\rangle=0\ \mbox{for all}\ u\in V^{\perp}\big\}
	$$
for a unique $\ZZ$-linear subspace $V^{\perp}\subset M$. Clearly $V$ is a subgroup of the abelian group $N_{\RR^{(k)}}$. Define $N_{\RR^{(k)}}/V$ to be the quotient of $N_{\RR^{(k)}}$ by $V$ in the category of abelian groups. If we let $V_{\ZZ}$ denote the $\ZZ$-linear dual of $V^{\perp}$ inside $N$, then there is a canonical isomorphism of abelian groups
	\begin{equation}\label{equation: canonical N/V isomorphism}
	N_{\RR^{(k)}}/V
	\ \ \cong\ \ 
	\big(N/V_{\ZZ}\big)_{\RR^{(k)}}.
	\end{equation}
	
	If $P$ is a $\Gamma$-rational polyhedron in $N_{\RR^{(k)}}$, then $\Gamma$-translates of $P$ will contain varying linear subspaces. Because the faces of $P$ are ordered by inclusion, with a unique maximal face, there exists a unique {\em largest linear subspace} contained in at least one of these translates. We say that $P$ is {\em pointed} if its largest linear subspace is $V=\{0\}$. If $V$ is the largest linear subspace of $P$, then the image of $P$ in $N_{\RR^{(k)}}/V$ is a pointed $\Gamma$-rational polyhedron.
\end{remark}

\begin{remark}\label{rmk: fans}
{\bf Fans.}
	 A {\em cone} in $N_{\RR^{(k)}}$ is a pointed $\{0\}$-rational polyhedron $\sigma\subset N_{\RR^{(k)}}$. A {\em fan} in $N_{\RR^{(k)}}$ is any $\{0\}$-rational polyhedral complex consisting entirely of cones. A fan $\Sigma$ in $N_{\RR^{(k)}}$ can be completely recovered from the collection $\{S_{\sigma}\}_{\sigma\in\Sigma}$ of dual semigroups
	$$
	S_{\sigma}\ :=\ \big\{u\in M:\langle u,v\rangle\ge0\ \mbox{for all}\ v\in\sigma\big\}.
	$$
Explicitly, we can take the cones in $N_{\RR^{(k)}}$ to be $\Hom(S_\sigma,\RR^{(k)}_{\geq 0})$ and glue along faces. In this way, a fan in $N_{\RR^{(k)}}$ contains no more information than a fan in $N_{\RR}$. One may informally think of this as a ``base change'' of the fan along the order preserving projection $\RR^{(k)}\to \RR$. 

	The {\em star} of a polyhedron $P$ in a polyhedral complex $\mathcal P$ is defined in direct analogy with the standard definition in the rank-$1$ case, as the collection of cones of unbounded directions. These cones glue to form a fan whose cones are indexed by the cells of $\mathcal P$ that contain $P$.
\end{remark}

\subsection{$\pmb{\Gamma}$-Admissible fans in $\pmb{N_{\RR^{(k)}}\!\times\!\mathcal{E}}$} 
The pairings (\ref{equation: M pairing}) and (\ref{equation: gamma pairing}) induce a single pairing
	$$
	\ \ \ 
	\begin{array}{rcl}
	\big(M\!\times\!\Gamma\big)\times\big(N_{\RR^{(k)}}\!\times\!\mathcal{E}\big)
	&
	\xrightarrow{\ \ \ \ \ }
	&
	\RR^{(k)}
	\\[10pt]
	\ \ \ \!(u,\gamma)\ \ \ \ \&\ \ \ \ \ \ (v,\ \!\varphi)\ \ \ \ \ \ 
	&
	\!\!\!\!\!\!\!\!\!\!\!\!\!\!\!\!\!\!\longmapsto
	&
	\!\!\!\!\!\!\!\!\!\!\langle u,\ \!v\rangle+\varphi(\gamma).
	\end{array}
	$$
When $k=1$, we have $N_{\RR^{(1)}}\!\times\mathcal{E}=N_{\RR\!}\times\RR_{\tinyge0}$, and the above pairing becomes the pairing that Gubler employs throughout \cite{Gub13}.
	
	A {\em $\Gamma$-admissible halfspace} in $N_{\RR^{(k)}}\!\times\!\mathcal{E}$ is any subset of the form
	$$
	H^{\tinyge0}
	\ =\ 
	\big\{(v,\varphi)\in N_{\RR^{(k)}}\!\times_{\!}\mathcal{E}:\langle u,\ \!v\rangle+\varphi(\gamma)\ge0\big\}
	$$
for a fixed pair $(u,\gamma)\in M\times\Gamma$. A {\em $\Gamma$-admissible cone} in $N_{\RR^{(k)}}\!\times\!\mathcal{E}$ is any subset $\sigma\subset N_{\RR^{(k)}}\!\times\!\mathcal{E}$ that is a finite intersection of $\Gamma$-admissible halfspaces in $N_{\RR^{(k)}}$, such that $\sigma$ does not contain any $1$-dimensional linear subspace of $N_{\RR^{(k)}}\!\times\!\{0\}\cong N_{\RR^{(k)}}$. If $\sigma$ is a $\Gamma$-admissible cone in $N_{\RR^{(k)}}\!\times\!\mathcal{E}$ of the form $\sigma=H^{\tinyge0}_{1}\cap\cdots\cap H^{\tinyge0}_{m}$, then a {\em face} of $\sigma$ is any one of the $\Gamma$-admissible cones that we obtain upon replacing any subset of the $\Gamma$-admissible halfspaces $H^{\tinyge0}_{\ell}$ in this intersection with their corresponding $\Gamma$-admissible hyperplanes $H^{0}_{\ell}$. A {\em $\Gamma$-admissible fan} in $N_{\RR^{(k)}}\!\times\!\mathcal{E}$ is any finite collection $\Sigma$ of $\Gamma$-admissible cones $\sigma\subset N_{\RR^{(k)}}\!\times\!\mathcal{E}$ satisfying the conditions:
	\begin{itemize}
	\item[\textbf{ (i)}]
	For each $\sigma\in\Sigma$, every face of $\sigma$ is in $\Sigma$;
	\item[\textbf{ (ii)}]\vskip .2cm
	For any $\sigma_{1},\sigma_{2}\in\Sigma$, the intersection $\sigma_{1}\cap \sigma_{2}$ is a $\Gamma$-admissible cone in $\Sigma$.
	\end{itemize}
	
	\ifshow

\begin{lemma}\label{halfspace lemma}
If $\Gamma\hookrightarrow\RR^{(k)}$ is strict, if $\gamma\in\Gamma$ is an element in the $j_{i}^{\mathrm{th}}$ Archimedean class of $\RR^{(k)}$, and if  $\gamma<0$, then the $\Gamma$-admissible halfspace $H^{\ge0}_{(0,\gamma)}\subset N_{\RR^{(k)}}\!\times\!\mathcal{E}$ determined by the pair $(0,\gamma)\in M\times\Gamma$ is equal to the subset $N_{\RR^{(k)}}\!\times\!\mathcal{E}_{i}\subset N_{\RR^{(k)}}\!\times\!\mathcal{E}$.
\end{lemma}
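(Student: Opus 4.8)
The plan is to reduce the claimed identity of subsets first to a statement purely about $\mathcal{E}$, and then to a short computation with the coordinates of $\underline{r}$ and $\gamma$ in $\RR^{(k)}$.

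First I would use that the covector of the pair is $0$: the inequality cutting out $H^{\geq 0}_{(0,\gamma)}$ is $\langle 0,v\rangle+\varphi(\gamma)\geq 0$, i.e. $\varphi(\gamma)\geq 0$, which imposes no condition on $v\in N_{\RR^{(k)}}$. So $H^{\geq 0}_{(0,\gamma)}=N_{\RR^{(k)}}\times\{\varphi\in\mathcal{E}:\varphi(\gamma)\geq 0\}$, and the lemma becomes the assertion $\{\varphi\in\mathcal{E}:\varphi(\gamma)\geq 0\}=\mathcal{E}_{i}$. I would then note that each $\varphi\in\mathcal{E}$ is an order-preserving homomorphism with $\varphi(0)=0$, so the hypothesis $\gamma<0$ forces $\varphi(\gamma)\leq 0$ for every $\varphi$; hence $\varphi(\gamma)\geq 0$ is equivalent to $\varphi(\gamma)=0$, and it suffices to prove $\{\varphi\in\mathcal{E}:\varphi(\gamma)=0\}=\mathcal{E}_{i}$.

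Next I would pass to coordinates: write $\varphi=\varphi_{\underline{r}}$ with $\underline{r}=(r_{1},\dots,r_{k})$, and $\gamma=(\gamma_{1},\dots,\gamma_{k})\in\RR^{(k)}$. The hypothesis that $\gamma$ lies in the $j_{i}^{\mathrm{th}}$ Archimedean class of $\RR^{(k)}$ — equivalently $\gamma\in\RR^{(j_{i})}\setminus\RR^{(j_{i}-1)}$, with $\RR^{(j)}\subset\RR^{(k)}$ as in Lemma~\ref{lemma: all embeddings are strict} — says precisely that $\gamma_{1}=\dots=\gamma_{k-j_{i}}=0$ while $\gamma_{k-j_{i}+1}\neq 0$. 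Since $\varphi_{\underline{r}}(\gamma)=(r_{1}\gamma_{1},\dots,r_{k}\gamma_{k})$, one has $\varphi_{\underline{r}}(\gamma)=0$ exactly when $r_{\ell}=0$ for every index $\ell$ with $\gamma_{\ell}\neq 0$; all such indices lie in $\{k-j_{i}+1,\dots,k\}$, and $\ell=k-j_{i}+1$ does occur. Recalling that $\mathcal{E}_{i}$ consists of those $\varphi_{\underline{r}}\in\mathcal{E}$ with $r_{k-j_{i}+1}=\dots=r_{k}=0$, the inclusion $\mathcal{E}_{i}\subseteq\{\varphi\in\mathcal{E}:\varphi(\gamma)=0\}$ is then immediate, since for such $\underline{r}$ every product $r_{\ell}\gamma_{\ell}$ vanishes ($\gamma_{\ell}=0$ for $\ell\leq k-j_{i}$ and $r_{\ell}=0$ for $\ell>k-j_{i}$).

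For the reverse inclusion I would take $\varphi_{\underline{r}}\in\mathcal{E}$ with $\varphi_{\underline{r}}(\gamma)=0$; reading off coordinate $k-j_{i}+1$ and using $\gamma_{k-j_{i}+1}\neq 0$ forces $r_{k-j_{i}+1}=0$. The crux — essentially the only step carrying content — is to upgrade this single vanishing to $r_{k-j_{i}+1}=\dots=r_{k}=0$, which is exactly membership in $\mathcal{E}_{i}$. Here I would invoke the shape of elements of $\mathcal{E}$ recorded in \eqref{equation: two descriptions of the flag}: every $\underline{r}\in\mathcal{E}$ has the form $(r_{1},\dots,r_{m},0,\dots,0)$ with $r_{1},\dots,r_{m}>0$ for some $0\leq m\leq k$, so its zero locus is a terminal segment of $\{1,\dots,k\}$; thus $r_{k-j_{i}+1}=0$ indeed forces all subsequent coordinates to vanish. (Should one prefer not to quote \eqref{equation: two descriptions of the flag}, this follows at once from order-preservation: $\varphi_{\underline{r}}(e_{\ell})=r_{\ell}e_{\ell}\geq 0$ gives $r_{\ell}\geq 0$, and if $r_{\ell}=0$, $\ell<k$, and $r_{\ell+1}>0$, then the positive vector $e_{\ell}-e_{\ell+1}$ is carried to the negative vector $-r_{\ell+1}e_{\ell+1}$.) Combining the two inclusions completes the proof. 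Without the terminal-segment shape of $\underline{r}$ the conclusion genuinely fails, so that is the only real obstacle; the strictness of $\Gamma\hookrightarrow\RR^{(k)}$ is used merely to pin down which Archimedean class of $\RR^{(k)}$ is the $j_{i}^{\mathrm{th}}$, and hence to justify the coordinate description of $\gamma$ above.
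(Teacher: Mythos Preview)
Your proof is correct and follows essentially the same route as the paper's: both reduce immediately to showing $\{\varphi\in\mathcal{E}:\varphi(\gamma)\ge0\}=\mathcal{E}_{i}$, identify the coordinate shape of $\gamma$ from its Archimedean class, and then invoke the structure of $\mathcal{E}$ (your ``terminal-segment'' description) together with the definition of $\mathcal{E}_{i}$. Your intermediate reduction from $\varphi(\gamma)\ge0$ to $\varphi(\gamma)=0$ and your explicit verification of the terminal-segment shape of $\underline{r}$ are pleasant elaborations but not a different strategy; the paper simply asserts the conclusion ``follows immediately'' from \eqref{equation: two descriptions of the flag} at the point where you unpack the computation.
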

\begin{proof}
The $\Gamma$-admissible halfspace determined by the pair $(0,\gamma)$ is the subset
	$$
	N_{\RR^{(k)}}\!\times\!\big\{\varphi\in\mathcal{E}:\varphi(\gamma)\ge0\big\}.
	$$
The Archimedean equivalence class of $\gamma$ in $\RR^{(k)}$ is completely determined by where the first nonzero entry $r_{k-j_{i}+1}$ of $\gamma=(0,\dots,0,r_{k-j_{i}+1},\dots,r_{k})$ occurs. If $\gamma<0$, then $r_{k-j_{i}+1}<0$, and it follows immediately from the second description of $\mathcal{E}$ in (\ref{equation: two descriptions of the flag}) and from the definition of $\mathcal{E}_{i}$ that $\big\{\varphi\in\mathcal{E}_{0}:\varphi(\gamma)\ge0\big\}=\mathcal{E}_{i}$.
\end{proof}

	\fi

\subsection{Recession complexes}
	Let $\sigma$ be a $\Gamma$-admissible cone in $N_{\RR^{(k)}}\!\times\!\mathcal{E}$. Then for each $0\le i\le n$, the {\em $i^{\mathrm{th}}$-recession polyhedron of $\sigma$}, denoted $\mathrm{rec}_{i}(\sigma)\subset N_{\RR^{(k)}}$, is the image of the set
	$$
	\sigma\cap\big(N_{\RR^{(k)}}\!\times\!\{\varepsilon_{i}\}\big)
	$$
under the projection $N_{\RR^{(k)}}\!\times\!\{\varepsilon_{i}\}\twoheadrightarrow N_{\RR^{(k)}}$. If $\Sigma$ is a $\Gamma$-admissible fan in $N_{\RR^{(k)}}\times\mathcal{E}_{0}$, then for each $0\le i\le  n$, the {\em $i^{\mathrm{th}}$-recession complex of $\Sigma$}, denoted $\mathrm{rec}_{i}(\Sigma)$, is the collection
	$$
	\mathrm{rec}_{i}(\Sigma)\ :=\ \big\{\mathrm{rec}_{i}(\sigma)\big\}_{\sigma\in\Sigma}.
	$$

\begin{proposition}\label{proposition: recession complex behave well}
Let $\Sigma$ be a $\Gamma$-admissible fan in $N_{\RR^{(k)}}\!\times\!\mathcal{E}$. For each $0\le i\le n$, let $\Gamma/\Delta_{i}\hookrightarrow\RR^{(k)}$ be the embedding (\ref{equation: important commutative square}). Then:
	\begin{itemize}
	\item[{\bf (i)}]
	For each $0\le i\le n$, the $i^{\mathrm{th}}$-recession complex $\mathrm{rec}_{i}(\Sigma)$ is a $\Gamma/\Delta_{i}$-rational polyhedral complex in $N_{\RR^{(k)}}$.
	\item[{\bf (ii)}]\vskip .2cm
	The $n^{\mathrm{th}}$-recession complex $\mathrm{rec}_{n}(\Sigma)$ is a fan in $N_{\RR^{(k)}}$.
	\end{itemize}
\end{proposition}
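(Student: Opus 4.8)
The plan is to pass from the defining halfspace presentation of a $\Gamma$-admissible cone $\sigma$ to an explicit halfspace presentation of $\mathrm{rec}_i(\sigma)$, and then to deduce the two polyhedral-complex axioms from the elementary observation that the slice projection $p_i\colon N_{\RR^{(k)}}\times\{\varepsilon_i\}\to N_{\RR^{(k)}}$ is a bijection.

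First I would record the explicit description. If $\sigma=H^{\ge 0}_{(u_1,\gamma_1)}\cap\cdots\cap H^{\ge 0}_{(u_m,\gamma_m)}$ inside $N_{\RR^{(k)}}\times\mathcal{E}$, then intersecting with the slice $N_{\RR^{(k)}}\times\{\varepsilon_i\}$ and applying $p_i$ yields
$$
\mathrm{rec}_i(\sigma)=\big\{v\in N_{\RR^{(k)}}:\ \langle u_\ell,v\rangle+\varepsilon_i(\gamma_\ell)\ge0,\ \ 1\le\ell\le m\big\}.
$$
By Lemma \ref{lem: embeddedquotients}, for $\gamma\in\Gamma$ the element $\varepsilon_i(\gamma)\in\RR^{(k)}$ is exactly the image of $\gamma+\Delta_i$ under the Hahn embedding $\Gamma/\Delta_i\hookrightarrow\RR^{(k)}$ of diagram (\ref{equation: important commutative square}); hence each defining inequality cuts out a $\Gamma/\Delta_i$-rational halfspace, and $\mathrm{rec}_i(\sigma)$ is a $\Gamma/\Delta_i$-rational polyhedron. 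For part (i) it then remains to check the two complex axioms. Closure under faces: replacing a subset $S$ of the halfspaces $H^{\ge 0}_{(u_\ell,\gamma_\ell)}$ by the hyperplanes $H^{0}_{(u_\ell,\gamma_\ell)}$ produces a cone $\tau\subseteq\sigma$, and carrying out the same replacement in the display above produces the corresponding face of $\mathrm{rec}_i(\sigma)$ (with respect to the presentation induced from that of $\sigma$), which $p_i$ identifies with $\mathrm{rec}_i(\tau)$; if this face is nonempty then $\tau\cap(N_{\RR^{(k)}}\times\{\varepsilon_i\})\neq\emptyset$, so $\tau$ is a nonempty face of $\sigma$, whence $\tau\in\Sigma$ and the face lies in $\mathrm{rec}_i(\Sigma)$. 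Closure under intersection is immediate from injectivity of $p_i$: $\mathrm{rec}_i(\sigma_1)\cap\mathrm{rec}_i(\sigma_2)=p_i\big((\sigma_1\cap\sigma_2)\cap(N_{\RR^{(k)}}\times\{\varepsilon_i\})\big)=\mathrm{rec}_i(\sigma_1\cap\sigma_2)$, and $\sigma_1\cap\sigma_2\in\Sigma$.

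For part (ii) I would take $i=n$, so $\Delta_n=\Gamma$ and $\Gamma/\Delta_n=\{0\}$; part (i) already gives that $\mathrm{rec}_n(\Sigma)$ is a $\{0\}$-rational polyhedral complex, and since $\Gamma\subseteq\RR^{(j_n)}$ we have $\varepsilon_n(\gamma)=0$ for every $\gamma\in\Gamma$, so each $\mathrm{rec}_n(\sigma)=\{v:\langle u_\ell,v\rangle\ge0,\ 1\le\ell\le m\}$ is a finite intersection of halfspaces through the origin. The one remaining point is pointedness: if a $1$-dimensional linear subspace $L\subseteq N_{\RR^{(k)}}$ were contained in $\mathrm{rec}_n(\sigma)$, then $L=-L$ forces $\langle u_\ell,v\rangle=0$ for all $v\in L$ and all $\ell$, and since $0\in\mathcal{E}$ this gives $L\times\{0\}\subseteq\sigma$, contradicting the defining requirement that a $\Gamma$-admissible cone contain no $1$-dimensional linear subspace of $N_{\RR^{(k)}}\times\{0\}$. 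Hence each $\mathrm{rec}_n(\sigma)$ is a cone and $\mathrm{rec}_n(\Sigma)$ is a fan.

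The argument is essentially formal once the explicit presentation is in hand; the only points demanding care are the identification of $\varepsilon_i(\gamma)$ with the Hahn embedding of $\Gamma/\Delta_i$ — which is precisely Lemma \ref{lem: embeddedquotients} — and the small bookkeeping step that a nonempty face of $\mathrm{rec}_i(\sigma)$ genuinely arises as $\mathrm{rec}_i(\tau)$ for an actual (nonempty, hence bona fide) face $\tau\in\Sigma$ of $\sigma$.
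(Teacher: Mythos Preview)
Your argument is correct and follows essentially the same route as the paper's proof: compute the slice of a single $\Gamma$-admissible halfspace at $\varepsilon_i$, identify $\varepsilon_i(\gamma)$ with the image of $\gamma+\Delta_i$ under the Hahn embedding of $\Gamma/\Delta_i$ (Lemma~\ref{lem: embeddedquotients}), and for part~(ii) derive a contradiction from a line in $\mathrm{rec}_n(\sigma)$. The paper stops after the halfspace computation and simply asserts that part~(i) follows, whereas you go on to verify the two polyhedral-complex axioms explicitly via the bijectivity of the slice projection $p_i$; this is more careful but not a different idea. Your pointedness argument passes through $L\times\{0\}\subseteq\sigma$ rather than the paper's $L\times\{\varepsilon_n\}$; yours matches the definition of $\Gamma$-admissible cone more directly, since that definition forbids lines in $N_{\RR^{(k)}}\times\{0\}$.
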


Note that the second conclusion is a special case of the first, since $\{0\}$-rational polyhedral complexes are fans.

\begin{proof}
If $H^{\ge0}$ is the $\Gamma$-admissible halfspace determined by the pair $(u,\gamma)\in M\times\Gamma$, then the image of $H^{\ge0}\cap\big(N_{\RR^{(k)}}\!\times\!\{\varepsilon_{i}\}\big)$ under the projection $N_{\RR^{(k)}}\!\times\!\{\varepsilon_{i}\}\twoheadrightarrow N_{\RR^{(k)}}$ is the set
	$$
	\big\{v\in N_{\RR^{(k)}}:\langle u,\ \!v\rangle\ge-\varepsilon_{i}(\gamma)\big\}.
	$$
Clearly $-\varepsilon_{i}(\gamma)\in\varepsilon_{i}(\Gamma)$, and thus part {\bf (i)} follows from the fact that $\varepsilon_{i}(\Gamma)$ is the image of $\Gamma/\Delta_{i}$ under the embedding $\Gamma/\Delta_{i}\hookrightarrow\RR^{(k)}$.
	
	When $i=n$, we have $\Gamma/\Delta_{n}=\{0\}$, and $\mathrm{rec}_{n}(\Sigma)$ is $\{0\}$-rational. If there exists a $\Gamma$-admissible cone $\sigma \in\Sigma$ such that $\mathrm{rec}_{n}(\sigma)$ contains a $\{0\}$-admissible line $L$ through the origin in $N_{\RR^{(k)}}$, then $L\times\{\varepsilon_{n}\}$ is a $\Gamma$-admissible line through the origin in $N_{\RR^{(k)}}\!\times\!\mathcal{E}$ contained in $\sigma$. This contradicts the fact that $\sigma$ is a $\Gamma$-admissible cone.
\end{proof}

\section{Models associated to polyhedral complexes}\label{section: models associated to polyhedral complexes}

Fix a field $K$ with valuation $v:K^{\times}\to\RR^{(k)}$, and define $\Gamma:=v(\Gamma)\subset\RR^{(k)}$. The resulting inclusion $\Gamma\hookrightarrow\RR^{(k)}$ is a Hahn embedding. Let (\ref{equation: convex subgroups in value group}) be the maximal tower of convex subgroups in $\Gamma$, so that $n=\mathrm{rank}_{\ \!}\Gamma$, and let $R_{i}$ and $\widetilde{K}_{i}$ be the corresponding intermediate valuation subrings and residue fields of $K$ as described in Section~\ref{subsection: finite rank valuation rings}.
	
	We now connect the combinatorial $\RR^{(k)}$-geometry of Sections \ref{section: structure of value groups} and \ref{section: polyhedral geometry over Hahn embeddings} to the geometry of degenerations of toric varities.

\subsection{Models associated to polyhedra}\label{sec: polyhedra-models}
	Given a $\Gamma$-rational polyhedron $P$ in $N_{\RR^{(k)}}$, its associated {\em $P$-tilted algebra} is the $R$-algebra
	$$
	R[M]^{P}
	\ :=\ 
	\Big\{\ \sum_{u\in M}a_{u}\chi^{u}\in K[M]\ :\ \langle u,v\rangle+\nu(a_{u})\ge0\ \mbox{for all}\ v\in P,\ u\in M\ \Big\}.
	$$
The {\em polyhedral model associated to $P$} is the affine $R$-scheme
	$$
	\mathscr{U}\!(P)
	\ :=\ 
	\spec_{\ \!}R[M]^P.
	$$
If $\sigma\subset N_{\RR^{(k)}}\!\times\!\mathcal{E}$ is a $\Gamma$-admissible cone, then we let $\mathscr{U}\!(\sigma):=\mathscr{U}\!\big(\mathrm{rec}_{0}(\sigma)\big)$ denote the polyhedral model associated to the $0^{\mathrm{th}}$ recession polyhedron of $\sigma$.
	
\begin{lemma}\label{lemma: passage to pointed polyhedron}
If $V$ is the largest linear subspace of $P$, let $P/V$ denote the image of $P$ under the map $N_{\RR^{(k)}}\lra\!\!\!\!\rightarrow N_{\RR^{(k)}}/V$. Then we have an isomorphism of $R$-algebras $R[M]^{P}\cong R[V^{\perp}]^{P/V}$.
\end{lemma}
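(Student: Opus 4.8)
The plan is to prove the sharper statement that, regarded as subsets of $K[M]$, the $R$-algebras $R[M]^{P}$ and $R[V^{\perp}]^{P/V}$ are literally equal; the isomorphism asserted in the lemma is then the one induced by the inclusion $K[V^{\perp}]\hookrightarrow K[M]$ of monomial subalgebras. Recall from the Remark on largest linear subspaces and pointed quotients that $V$ is cut out by a saturated sublattice $V^{\perp}\subset M$, that $N_{\RR^{(k)}}/V$ is canonically identified with $(N/V_{\ZZ})_{\RR^{(k)}}=\Hom_{\ZZ}(V^{\perp},\RR^{k})$, and that under this identification the natural pairing $V^{\perp}\times\big(N_{\RR^{(k)}}/V\big)\to\RR^{(k)}$ is the one induced by $\langle-,-\rangle$; in particular $\langle u,v\rangle=\langle u,\bar v\rangle$ whenever $u\in V^{\perp}$, where $\bar v$ denotes the image of $v\in N_{\RR^{(k)}}$ in $N_{\RR^{(k)}}/V$. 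We take $P$ nonempty; since $V$ is the largest linear subspace contained in a translate of $P$, we may fix $v_{0}\in N_{\RR^{(k)}}$ with $v_{0}+V\subseteq P$.

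The heart of the argument is the following claim: if $u\in M$ and $\gamma\in\Gamma$ satisfy $v_{0}+V\subseteq H^{\ge 0}_{(u,\gamma)}$, then $u\in V^{\perp}$. To see this, fix $w\in V$; for each integer $m$ we have $v_{0}+mw\in v_{0}+V\subseteq H^{\ge 0}_{(u,\gamma)}$, hence $\langle u,v_{0}\rangle+m\langle u,w\rangle\ge\gamma$ in $\RR^{(k)}$, so the integer multiples of $\langle u,w\rangle$ are all bounded below by the fixed element $\gamma-\langle u,v_{0}\rangle\in\RR^{(k)}$. Since the integer multiples of a nonzero element of the lexicographically ordered group $\RR^{(k)}$ are not bounded below, this forces $\langle u,w\rangle=0$; as $w\in V$ was arbitrary, $u\in V^{\perp}$. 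Writing $P=\bigcap_{\ell}H^{\ge 0}_{(u_{\ell},\gamma_{\ell})}$ and applying the claim to each of the defining halfspaces (all of which contain $v_{0}+V$), we get $u_{\ell}\in V^{\perp}$ for every $\ell$, and one then checks directly that $P/V=\bigcap_{\ell}H^{\ge 0}_{(u_{\ell},\gamma_{\ell})}$ inside $\Hom_{\ZZ}(V^{\perp},\RR^{k})$; so $P/V$ is a $\Gamma$-rational polyhedron and $R[V^{\perp}]^{P/V}$ is an honest polyhedral model.

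It remains to identify the two algebras inside $K[M]$. First, $R[M]^{P}\subseteq K[V^{\perp}]$: if $f=\sum_{u}a_{u}\chi^{u}$ lies in $R[M]^{P}$ and $a_{u}\ne 0$, then $\langle u,v\rangle+\nu(a_{u})\ge 0$ for all $v\in P$, i.e.\ $P\subseteq H^{\ge 0}_{(u,-\nu(a_{u}))}$ with $-\nu(a_{u})\in\Gamma$; hence $v_{0}+V\subseteq H^{\ge 0}_{(u,-\nu(a_{u}))}$, and the claim gives $u\in V^{\perp}$, so $f\in K[V^{\perp}]$. Second, for $f=\sum_{u\in V^{\perp}}a_{u}\chi^{u}\in K[V^{\perp}]$, membership in $R[M]^{P}$ asserts that $\langle u,v\rangle+\nu(a_{u})\ge 0$ for all $v\in P$ and all $u$ with $a_{u}\ne 0$; replacing $\langle u,v\rangle$ by $\langle u,\bar v\rangle$ (legitimate since $u\in V^{\perp}$) and observing that $\bar v$ runs over $P/V$ exactly as $v$ runs over $P$, this becomes precisely the condition defining membership in $R[V^{\perp}]^{P/V}$. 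Combining these two facts, $R[M]^{P}=R[M]^{P}\cap K[V^{\perp}]=R[V^{\perp}]^{P/V}$ as subsets, hence as $R$-subalgebras, of $K[M]$, which is the lemma.

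I expect the only step that is not entirely formal to be the order-theoretic claim in the second paragraph: it is the higher-rank substitute for the standard fact that a linear functional which is bounded below on a polyhedron must vanish on the polyhedron's largest linear subspace, and in the lexicographically ordered setting it still goes through without difficulty once one uses that no nontrivial cyclic subgroup of $\RR^{(k)}$ is bounded below.
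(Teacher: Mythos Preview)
Your overall strategy coincides with the paper's: show that every monomial $a_{u}\chi^{u}$ appearing in $R[M]^{P}$ must have $u\in V^{\perp}$, and then identify the two algebras as subrings of $K[M]$. In several respects your write-up is more careful than the paper's, for instance in making explicit the translate $v_{0}+V\subseteq P$ and in checking that $P/V$ is a $\Gamma$-rational polyhedron in $\Hom_{\ZZ}(V^{\perp},\RR^{k})$.

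There is, however, a genuine gap in the order-theoretic step. You assert that ``the integer multiples of a nonzero element of the lexicographically ordered group $\RR^{(k)}$ are not bounded below.'' This is false whenever $k\ge 2$: take $x=(0,\dots,0,1)\in\RR^{(k)}$; then $mx=(0,\dots,0,m)$ has first coordinate $0>-1$, so $mx>(-1,0,\dots,0)$ for every $m\in\ZZ$. The failure of this claim is exactly the non-Archimedean nature of $\RR^{(k)}$, which is the whole point of the higher-rank setting. Consequently your argument that $\langle u,w\rangle=0$ for an \emph{arbitrary} $w\in V$ does not go through as written.

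The fix, which is precisely what the paper does, is to exploit the $\RR^{k}$-module structure of $V=V_{\ZZ}\otimes_{\ZZ}\RR^{k}$ rather than only its abelian group structure. If $u\notin V^{\perp}$, choose $n\in V_{\ZZ}$ with $\langle u,n\rangle\ne 0$ in $\ZZ$, and set $w=n\otimes(1,0,\dots,0)\in V$. Then $\langle u,w\rangle=(\langle u,n\rangle,0,\dots,0)$ lies in the largest Archimedean class of $\RR^{(k)}$, and for such an element the integer multiples \emph{are} unbounded below, so your boundedness inequality $m\langle u,w\rangle\ge \gamma-\langle u,v_{0}\rangle$ gives the desired contradiction. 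With this single correction, the remainder of your proof is correct and essentially identical to the paper's.
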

\begin{proof}
Suppose that $a_{u}\chi^{u}$ is a monomial in $R[M]^{P}$, i.e., that $\langle u,v\rangle+\nu(a_{u})\ge0$ for all $v\in P$. If $u\notin V^{\perp}$, then there is a homomorphism $v_{1}:M\to\ZZ$ such that $v_{1}(u)\ne 0$, and $v_{1}(u')=0$ for all $u'\in V^{\perp}$. Let $v:M\to\RR^{(k)}$ be the map that returns $\langle u',v\rangle=\big(v_{1}(u'),\ 0,\ \dots,\ 0\big)\in\RR^{(k)}$ at each $u'\in M$. Then $v\in V$, and $\langle u,v\rangle$ is in the largest Archimedean class of $\RR^{(k)}$. This implies that there exists $m\in\ZZ$ so that $\langle u, m\ \!v\rangle\ge-\nu(a_{u})$, contradicting our choice of $u$ and $v$. Thus $u\in V^{\perp}$. From the isomorphism (\ref{equation: canonical N/V isomorphism}) we see that $a_{u}\chi^{u}\in R[V^{\perp}]^{P/V}$. The converse follows immediately from (\ref{equation: canonical N/V isomorphism}) and the definition of $P/V$.
\end{proof}
	
	\begin{proposition}\label{prop: flat, integral domain, integrally closed}
The R-algebra $R[M]^P$ is flat, is an integral domain, and is integrally closed in its fraction field.
\end{proposition}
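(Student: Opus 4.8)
The plan is to dispatch the two easy assertions at once and then concentrate on integral closure. Since $R[M]^{P}$ is by definition a subring of $K[M]\cong K[x_{1}^{\pm1},\dots,x_{n}^{\pm1}]$, which is an integral domain, $R[M]^{P}$ is an integral domain. For flatness, note that $K[M]$ is a $K$-vector space, hence torsion-free as an $R$-module, so its $R$-submodule $R[M]^{P}$ is torsion-free as well; since a torsion-free module over a valuation ring is flat, $R[M]^{P}$ is flat over $R$.

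For integral closure I would first invoke Lemma~\ref{lemma: passage to pointed polyhedron} to reduce to the case in which $P$ is pointed (all three properties are invariant under $R$-algebra isomorphism), and I may also assume $P\neq\emptyset$ since $R[M]^{\emptyset}=K[M]$. Let $\sigma:=\mathrm{rec}(P)$ be the recession cone of $P$, a strictly convex $\{0\}$-rational cone in $N_{\RR^{(k)}}$, and let $w_{1},\dots,w_{r}$ be the vertices of $P$; these are $\Gamma$-rational, so $\langle u,w_{j}\rangle\in\Gamma$ for all $u\in M$. For $u\in M$ the functional $\langle u,-\rangle$ is bounded below on $P$ precisely when $u\in\sigma^{\vee}$, and in that case, because $P=\mathrm{conv}(w_{1},\dots,w_{r})+\sigma$ and $\langle u,-\rangle\ge 0$ on $\sigma$, its minimum on $P$ is attained at one of the $w_{j}$. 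Unwinding the definition of $R[M]^{P}$ with this observation gives a presentation
$$
R[M]^{P}\ =\ \bigcap_{j=1}^{r}A_{j},\qquad A_{j}\ :=\ \Big\{\textstyle\sum_{u\in\sigma^{\vee}\cap M}a_{u}\chi^{u}\in K[M]\ :\ \langle u,w_{j}\rangle+\nu(a_{u})\ge 0\ \ \text{for all }u\Big\}.
$$

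Finally, I would show that each $A_{j}$ is integrally closed in $K(M):=\operatorname{Frac}(K[M])$. Fixing $j$, since $\langle-,w_{j}\rangle$ takes values in $\Gamma=\nu(K^{\times})$ and $M$ is free, choose a homomorphism $c\colon M\to K^{\times}$, $u\mapsto c_{u}$, with $\nu(c_{u})=\langle u,w_{j}\rangle$; then $\chi^{u}\mapsto c_{u}\chi^{u}$ is a $K$-algebra automorphism of $K[M]$ carrying $A_{j}$ isomorphically onto $R[\sigma^{\vee}\cap M]=R\otimes_{\ZZ}\ZZ[\sigma^{\vee}\cap M]$. The latter is integrally closed in $K(M)$: if $\sigma=\{0\}$ it equals $R[M]$, a localization of the polynomial ring $R[x_{1},\dots,x_{n}]$ over the integrally closed ring $R$; in general, writing $\sigma^{\vee}=\bigcap_{\rho}\rho^{\vee}$ over the rays $\rho$ of $\sigma$ presents $R[\sigma^{\vee}\cap M]$ as a finite intersection inside $K[M]$ of rings $R[\rho^{\vee}\cap M]\cong R[y,x_{2}^{\pm1},\dots,x_{n}^{\pm1}]$, each a localization of a polynomial ring over $R$ with fraction field $K(M)$ (since $\rho^{\vee}\cap M$ generates $M$), hence integrally closed in $K(M)$; and a finite intersection of subrings of $K(M)$ integrally closed in $K(M)$ is again integrally closed in $K(M)$. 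Applying this last remark twice, $R[M]^{P}=\bigcap_{j}A_{j}$ is integrally closed in $K(M)$, and since $R[M]^{P}\subseteq K[M]$ has fraction field contained in $K(M)$, it is integrally closed in its own fraction field. The only real obstacle is the polyhedral bookkeeping — setting up the pointed reduction and verifying the ``minimum at a vertex'' fact for linear functionals on $\Gamma$-rational polyhedra over $\RR^{(k)}$ using the theory of \S\ref{subsection: rational polyhedra} — since the commutative-algebra inputs (normality of Laurent and toric algebras over a normal base, stability of integral closure under intersection) are entirely standard.
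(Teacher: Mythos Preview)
Your treatment of the integral-domain and flatness claims is fine and essentially matches the paper's: both boil down to $R[M]^{P}\subset K[M]$ being torsion-free over the valuation ring $R$.

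For integral closure your overall strategy---reduce to the pointed case via Lemma~\ref{lemma: passage to pointed polyhedron}, then write $R[M]^{P}=\bigcap_{j}A_{j}$ as an intersection over vertices and show each $A_{j}$ is integrally closed in $K(M)$---also matches the paper. The gap is in how you prove that $A_{j}$ is integrally closed. You assert that the vertices $w_{j}$ are $\Gamma$-rational, i.e.\ that $\langle u,w_{j}\rangle\in\Gamma$ for all $u\in M$, and then use this to choose $c_{u}\in K^{\times}$ with $\nu(c_{u})=\langle u,w_{j}\rangle$ and transport $A_{j}$ isomorphically onto $R[\sigma^{\vee}\cap M]$. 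But vertices of a $\Gamma$-rational polyhedron need not be $\Gamma$-rational points unless $\Gamma$ is divisible: already in rank~$1$ with $\Gamma=\ZZ$ and $M=\ZZ$, the polyhedron $\{v:2v\ge 1\}$ has vertex $\tfrac{1}{2}$, and no $c\in K^{\times}$ has $\nu(c)=\tfrac{1}{2}$. (The paper itself only invokes $\Gamma$-rationality of vertices in the next proposition, and there explicitly under the hypothesis that $K$ is algebraically closed.) So your change-of-variables step, and with it the reduction to $R[\sigma^{\vee}\cap M]$, is unavailable in the generality claimed.

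The paper avoids this by not trying to identify $A_{j}$ with a toric $R$-algebra at all. Instead it observes that for any vertex $v$ (rational or not) the assignment $a\chi^{u}\mapsto\langle u,v\rangle+\nu(a)$ extends to a valuation $\nu_{\{v\}}$ on $K[M]$ with values in $\RR^{(k)}$, so that $R[M]^{\{v\}}$ is the intersection of $K[M]$ with a valuation ring of $K(M)$, hence integrally closed. This sidesteps divisibility entirely; if you replace your change-of-variables paragraph with this monomial-valuation argument, the rest of your proof goes through.
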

\begin{proof}
	Since $R[M]^P$ is a subring of $K[M]$, it is an integral domain. To see that $R[M]^P$ is flat, it suffices to check  that for any finitely generated ideal $I\subset R$, the map $I\otimes_R R[M]^P\to R[M]^P$ is injective \cite[Proposition 2.19(iv)]{AM69}. But $R$ is a valuation ring, so every finitely generated ideal is principal. Injectivity is for such ideals follows from the fact that $K[M]$ has no torsion elements. To verify that $R[M]^P$ is integrally closed in its fraction field, first note that by Lemma \ref{lemma: passage to pointed polyhedron}, we may assume that $P$ is pointed. This implies that every $\RR^{(k)}$-valued $\Gamma$-affine functional on $N_{\RR^{(k)}}$ takes its minimum at a vertex of $P$. We deduce that the algebra $R[M]^P$ is the intersection of $R[M]^{\{v\}}$ over vertices $v$ of $P$. It suffices now to show the claim for $R[M]^{\{v\}}$. By modifying the standard arguments for monomial valuations on polynomial rings~\cite[Proposition 2.1.2]{Ked}, one may verify that the assignment $\nu_{\{v\}}:R[M]^{\{v\}}\to\RR^{(k)}$ taking $a\chi^{u}\mapsto\langle u,v\rangle+\nu(a)$ is a valuation. The claim then follows by the usual argument for integral closure of valuation rings~\cite[Proposition 5.18(iii)]{AM69}.
\end{proof}

\begin{proposition}\label{prop: finite-pres}
If $K$ is algebraically closed, then $R[M]^P$ is of finite presentation over $R$.
\end{proposition}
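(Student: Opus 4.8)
The plan is to reduce to the case that $P$ is pointed, to extract from the hypothesis that $K$ is algebraically closed the divisibility of the value group $\Gamma$, and then to present $R[M]^{P}$ as an ``affine toric'' $R$-algebra attached to a finitely generated monoid, the one new feature being that the coefficient data is recorded by the valuation ring $R$ rather than by a field. By Lemma~\ref{lemma: passage to pointed polyhedron}, which gives $R[M]^{P}\cong R[V^{\perp}]^{P/V}$ with $P/V$ pointed, we may assume $P$ is pointed. Since $K$ is algebraically closed, $\Gamma$ is divisible; hence every vertex of the $\Gamma$-rational polyhedron $P$ lies in $N\otimes_{\ZZ}\Gamma$, and for each $u\in M$ the functional $v\mapsto\langle u,v\rangle$ on $P$ is either unbounded below --- in which case no monomial $a\chi^{u}$ with $a\neq0$ lies in $R[M]^{P}$ --- or attains a minimum $m_{P}(u)\in\Gamma$ at a vertex. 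Writing $\bar\sigma:=\mathrm{rec}_{0}(P)$ for the recession cone of $P$ and $S:=\{u\in M:\langle u,v\rangle\geq0\text{ for all }v\in\bar\sigma\}$ for its dual monoid, this exhibits $R[M]^{P}$ as the $M$-graded ring $R[M]^{P}=\bigoplus_{u\in S}\mathfrak a_{u}\chi^{u}$ with $\mathfrak a_{u}:=\{a\in K:\nu(a)\geq-m_{P}(u)\}=a_{u}R$, where $a_{u}\in K^{\times}$ is any element with $\nu(a_{u})=-m_{P}(u)$; such an $a_{u}$ exists precisely because $-m_{P}(u)\in\Gamma$, so each graded piece is a free $R$-module of rank one. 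This is the step in which divisibility of $\Gamma$ is indispensable: without it the $\mathfrak a_{u}$ can be non-finitely-generated $R$-modules --- already for value groups of the form $\ZZ^{(k)}$ one can arrange that $\mathfrak a_{u}$ is an intermediate valuation ring $R_{i}$ of \S\ref{subsection: finite rank valuation rings}.

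Next I would establish that $R[M]^{P}$ is of finite type over $R$. The function $\mu:=-m_{P}|_{S}$ is sublinear, is $\Gamma$-valued, and is the restriction to $S$ of the piecewise-linear function $u\mapsto\max_{w}\langle u,-w\rangle$ on $\bar\sigma^{\vee}$, the maximum running over the finitely many vertices $w$ of $P$. Because $\Gamma$ is divisible there is a finitely generated subgroup $\Gamma_{0}\cong\ZZ^{m}$ of $\Gamma$ containing all the $M$-pairings of vertices of $P$, relative to which the combinatorics becomes classical; a Gordan-type argument --- the higher-rank analogue of the corresponding step of \cite[Section~6]{Gub13}, and the principal combinatorial input of the proof --- then produces a finite set $F\subset S$ such that every $u\in S$ admits a decomposition $u=\sum_{u'\in F}n_{u'}u'$ with $n_{u'}\in\NN$ realizing $\mu(u)=\sum_{u'\in F}n_{u'}\mu(u')$; concretely one subdivides $\bar\sigma^{\vee}$ into the cones on which a single vertex achieves the maximum (so $\mu$ is linear there) and takes $F$ to be the union of their monoid generators. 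Choosing $b_{u'}\in K^{\times}$ with $\nu(b_{u'})=\mu(u')$ for $u'\in F$, the monomials $b_{u'}\chi^{u'}$ generate $R[M]^{P}$ as an $R$-algebra: any monomial $a\chi^{u}\in R[M]^{P}$ has $u\in S$, so writing $u=\sum n_{u'}u'$ as above one finds $a/\prod_{u'}b_{u'}^{n_{u'}}\in R$ and hence $a\chi^{u}=\big(a/\prod_{u'}b_{u'}^{n_{u'}}\big)\cdot\prod_{u'}(b_{u'}\chi^{u'})^{n_{u'}}$.

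It remains to pass from finite type to finite presentation, and this I expect to be the main obstacle: one must bound the kernel $I$ of a surjection $R[x_{1},\dots,x_{s}]\twoheadrightarrow R[M]^{P}$, $x_{k}\mapsto b_{k}\chi^{u_{k}}$, and since $R$ is not Noetherian this does not come for free. Grading $R[x_{1},\dots,x_{s}]$ by $\deg x_{k}=u_{k}$ makes $I$ homogeneous, so it suffices to control $I$ in each $M$-degree; and since the $\chi^{u}$ are $K$-linearly independent, an element of degree $u$ lies in $I$ exactly when the induced $R$-linear relation among the scalars $\prod_{k}b_{k}^{\alpha_{k}}\in K$ (over $\alpha$ with $\sum_{k}\alpha_{k}u_{k}=u$) holds in $K$. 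One then checks, as in the toric-ideal argument over a field but carrying the coefficients along, that $I$ is generated by the finitely many binomials attached to a finite generating set of the relation monoid $L:=\{(\alpha,\beta)\in\NN^{s}\times\NN^{s}:\sum_{k}(\alpha_{k}-\beta_{k})u_{k}=0\}$, which is finitely generated by Gordan's lemma. The valuation-ring property of $R$ enters through the fact that any two monomials of the same $M$-degree have comparable valuations, so each binomial may be normalized to have its coefficient in $R$; the delicate point --- organizing the reduction so that peeling off one generator of $L$ keeps all intermediate coefficients in $R$, which one arranges by always pivoting on the monomial of least valuation in the given degree --- is where the real work lies. Granting this, $I$ is finitely generated and $R[M]^{P}$ is of finite presentation over $R$.
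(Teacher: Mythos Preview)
Your reduction to the pointed case and your finite-generation argument are correct and essentially coincide with the paper's: the paper also passes to the pointed quotient via Lemma~\ref{lemma: passage to pointed polyhedron}, uses divisibility of $\Gamma$ to see that vertices are $\Gamma$-rational, decomposes $\sigma_{P}^{\vee}$ into the dual cones $\sigma_{j}^{\vee}$ of the stars at the vertices $v_{j}$ (your ``cones on which a single vertex achieves the maximum''), and then takes monoid generators of each $S_{\sigma_{j}}$ scaled by units of the right valuation.

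Where you diverge is the passage from finite type to finite presentation. You attempt to control the kernel $I$ of $R[x_{1},\dots,x_{s}]\twoheadrightarrow R[M]^{P}$ directly by a binomial/Gordan argument, and you correctly flag as ``the delicate point'' the question of whether the reduction by finitely many normalized binomials can be organized so that all intermediate coefficients stay in $R$. You do not actually carry this out, and over a non-Noetherian valuation ring it is not automatic: one must really prove that $(I\otimes_{R}K)\cap R[x]=I$, i.e.\ that the quotient is $R$-torsion-free, and your pivoting sketch stops short of this. The paper sidesteps the whole issue: having already shown in Proposition~\ref{prop: flat, integral domain, integrally closed} that $R[M]^{P}$ is flat over the integral domain $R$, it simply invokes the Raynaud--Gruson theorem \cite[Corollary~3.4.7]{RG71} that a finitely generated flat algebra over an integral domain is of finite presentation. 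With flatness in hand, finite generation is the only thing left to prove --- and that is precisely the part you did successfully. So your proof becomes complete, and much shorter, if you replace your third paragraph by a one-line citation of Raynaud--Gruson together with Proposition~\ref{prop: flat, integral domain, integrally closed}.
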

\begin{proof}
We follow~\cite[Proposition 4.11]{BPR}. By Lemma \ref{lemma: passage to pointed polyhedron}, we may assume that $P$ is pointed. Because every finitely generated flat algebra over an integral domain must be of finite presentation~\cite[Corollary 3.4.7]{RG71}, Proposition \ref{prop: flat, integral domain, integrally closed} reduces the proof to a check that $R[M]^{P}$ finitely generated. Let $v_1,\ldots, v_r$ be the vertices of $P$. Write $\sigma_i$ for the star at $v_i$. The union of the dual cones $\sigma_i^\vee$ is $\sigma_P^\vee$. It is easy to see that $R[M]^P$ is generated by subrings $R[M]^P\cap K[S_{\sigma_j}]$, and thus it suffices, for each $j$, to find a finite set that generates $R[M]^P\cap K[S_{\sigma_j}]$.

Fix an index $j$. The monoid $S_{\sigma_j}$ is finitely generated, say by $u_1,\ldots, u_s$. Consider the element $\langle u_i,v_j\rangle$. Since $\Gamma$ is divisible, the vertex $v_j$ is a to be a $\Gamma$-rational point of $N_{\RR^{(k)}}$, and thus we may choose scalars $a_i\in K^\times$ such that $\nu(a_{i})+\langle u_i, v_j\rangle$ is zero. The set $\{a_i \chi^{u_i}\}$ now generates $A_j$ over $R$.
\end{proof}

\begin{remark}\label{remark: the R-torus, etc}
Let $\slantbox[.6]{$\mathcal{T}\ $}$ denote the $R$-torus $\spec_{\ \!}R[M]$. Then for each $\Gamma$-rational polyhedron $P$ in $N_{\RR^{(k)}}$, the diagonal $R$-morphism $R[M]^{P}\!\to R[M]\otimes_{R}R[M]^{P}$ equips the polyhedral model $\mathscr{U}\!(P)$ with a natural $\slantbox[.6]{$\mathcal{T}\ $}$-action. let $T$ denote the generic torus $T:=\slantbox[.6]{$\mathcal{T}\ $}_{\!K}=\mathrm{Spec}_{\ \!}K[M]$.
\end{remark}

\ifshow

\begin{proposition}\label{proposition: form of the generic fiber}
If $\sigma$ is a $\Gamma$-admissible cone in $N_{\RR^{(k)}}\!\times\!\mathcal{E}$, then the induced $T$-action on the generic fiber $\mathscr{U}\!(\sigma)_{K}$ gives it the structure of an affine toric $K$-variety isomorphic to $U_{\mathrm{rec}_{n}(\sigma)}:=\mathrm{Spec}_{\ \!}K\big[S_{\mathrm{rec}_{n}(\sigma)}\big]$.
\end{proposition}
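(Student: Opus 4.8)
The plan is to identify the $K$-algebra $R[M]^{P}\otimes_{R}K$, where $P:=\mathrm{rec}_{0}(\sigma)$, with the semigroup algebra $K\big[S_{\mathrm{rec}_{n}(\sigma)}\big]$ in a way compatible with the torus coaction. Write $\sigma=H^{\ge0}_{(u_{1},\gamma_{1})}\cap\cdots\cap H^{\ge0}_{(u_{m},\gamma_{m})}$ as an intersection of $\Gamma$-admissible halfspaces. Since $\varepsilon_{0}=\mathrm{id}$ one has $P=\{v\in N_{\RR^{(k)}}:\langle u_{\ell},v\rangle+\gamma_{\ell}\ge0\ \text{for all}\ \ell\}$, while because $\Gamma\subseteq\RR^{(j_{n})}$ (see \S\ref{subsection: flag of endomorphisms}) the endomorphism $\varepsilon_{n}$ kills every $\gamma_{\ell}$, so that $\mathrm{rec}_{n}(\sigma)=\{v:\langle u_{\ell},v\rangle\ge0\ \text{for all}\ \ell\}$; provided $P\ne\emptyset$ this is precisely the recession cone of $P$. (When $\mathrm{rec}_{0}(\sigma)=\emptyset$ one has $R[M]^{P}=K[M]$, so $\mathscr{U}(\sigma)_{K}=T$; I will assume $\mathrm{rec}_{0}(\sigma)\ne\emptyset$ in what follows and flag the degenerate case below.) By construction $\mathscr{U}(\sigma)_{K}=\Spec\big(R[M]^{P}\otimes_{R}K\big)$, so the whole content is the computation of this algebra together with its torus action.

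First I would record the $M$-grading $R[M]^{P}=\bigoplus_{u\in M}\mathfrak{a}_{u}\chi^{u}$, where $\mathfrak{a}_{u}=\{a\in K:\nu(a)+\langle u,v\rangle\ge0\ \text{for all}\ v\in P\}$ is a fractional $R$-ideal, possibly zero. Since $R[M]^{P}\hookrightarrow K[M]$ and $K$ is flat over $R$, the base change $R[M]^{P}\otimes_{R}K$ is the $M$-graded $K$-subalgebra $\bigoplus_{u:\ \mathfrak{a}_{u}\ne0}K\chi^{u}$ of $K[M]$, because $\mathfrak{a}_{u}\otimes_{R}K\cong K$ whenever $\mathfrak{a}_{u}\ne0$ and vanishes otherwise. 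Thus the problem reduces to the identity $\{u:\mathfrak{a}_{u}\ne0\}=S_{\mathrm{rec}_{n}(\sigma)}$. If $u\in S_{\mathrm{rec}_{n}(\sigma)}$ then, by Farkas, $u$ lies in the cone generated by $u_{1},\dots,u_{m}$, so a bound $\langle u,v\rangle\ge-\delta$ valid on all of $P$ can be taken with $\delta$ a nonnegative $\RR$-combination of the $\gamma_{\ell}$'s; since $\Gamma$ is cofinal in its convex hull inside $\RR^{(k)}$ we may enlarge $\delta$ to lie in $\Gamma$, whence $\mathfrak{a}_{u}\ne0$. Conversely, if $u\notin S_{\mathrm{rec}_{n}(\sigma)}$ then (again by Farkas, using that $\mathrm{rec}_{n}(\sigma)$ is $\{0\}$-rational — Proposition~\ref{proposition: recession complex behave well} — hence a base change of a rational cone $\tau\subset N_{\RR}$ with $S_{\mathrm{rec}_{n}(\sigma)}=S_{\tau}$, see Remark~\ref{rmk: fans}) there is a rational ray in $\mathrm{rec}_{n}(\sigma)$, lying in the copy of $N_{\RR}$ dual to the first-coordinate projection $\RR^{(k)}\to\RR$, along which $\langle u,-\rangle$ is strictly negative already in $\RR\subset\RR^{(k)}$; translating this ray into $P$ shows that $\langle u,-\rangle$ takes values of arbitrarily negative first coordinate on $P$, so $\mathfrak{a}_{u}=0$. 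This gives $R[M]^{P}\otimes_{R}K=K\big[S_{\mathrm{rec}_{n}(\sigma)}\big]$ inside $K[M]$.

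Finally I would verify equivariance. The action on $\mathscr{U}(\sigma)$ of the $R$-torus $\Spec R[M]$ of Remark~\ref{remark: the R-torus, etc} is induced by the $R$-algebra homomorphism $R[M]^{P}\to R[M]\otimes_{R}R[M]^{P}$, $\chi^{u}\mapsto\chi^{u}\otimes\chi^{u}$; base changing along $R\to K$ turns it into $K\big[S_{\mathrm{rec}_{n}(\sigma)}\big]\to K[M]\otimes_{K}K\big[S_{\mathrm{rec}_{n}(\sigma)}\big]$, $\chi^{u}\mapsto\chi^{u}\otimes\chi^{u}$, which is exactly the coaction presenting $\Spec K\big[S_{\mathrm{rec}_{n}(\sigma)}\big]=U_{\mathrm{rec}_{n}(\sigma)}$ as the affine toric $K$-variety of the cone $\mathrm{rec}_{n}(\sigma)$ under the generic torus $T$. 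Hence the equality of subalgebras of $K[M]$ found above is a $T$-equivariant isomorphism $\mathscr{U}(\sigma)_{K}\cong U_{\mathrm{rec}_{n}(\sigma)}$, as claimed. The step I expect to cost the most is the duality argument in the second paragraph: over the lexicographically ordered group $\RR^{(k)}$, ``bounded below'' is weaker and more delicate than over $\RR$, and controlling it genuinely relies on $\mathrm{rec}_{n}(\sigma)$ being a base change of a rational cone in $N_{\RR}$ (equivalently, on running Fourier--Motzkin elimination over the ordered $\QQ$-vector space $\RR^{(k)}$); the only other point requiring care is the degenerate case $\mathrm{rec}_{0}(\sigma)=\emptyset$.
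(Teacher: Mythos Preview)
Your proof is correct and follows essentially the same strategy as the paper's (which appears both in the hidden proposition and, more visibly, in the proof of part {\bf (iii)} of the Main Theorem): identify $R[M]^{P}\otimes_{R}K$ with the $M$-graded subalgebra $K[S_{\mathrm{rec}_{n}(\sigma)}]\subset K[M]$ by showing that a monomial $\chi^{u}$ survives after inverting $R$ precisely when $u\in S_{\mathrm{rec}_{n}(\sigma)}$, and in both cases the ``$u\notin S$'' direction uses that $P$ is stable under translation by its recession cone.

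The one substantive difference is in the ``$u\in S\Rightarrow\mathfrak{a}_{u}\ne0$'' direction. The paper argues that the affine functional $v\mapsto\langle u,v\rangle$ attains its minimum on $P$ at a vertex (implicitly after reducing to the pointed case via Lemma~\ref{lemma: passage to pointed polyhedron}) and then clears denominators with a scalar $\lambda\in K$. You instead invoke Farkas to write $u$ as a nonnegative real combination of the $u_{\ell}$, obtain an explicit lower bound $-\sum c_{\ell}\gamma_{\ell}$, and then use cofinality of $\Gamma$ in its $\RR^{(k)}$-convex hull. Your route avoids the reduction to pointed polyhedra and is arguably cleaner in the higher-rank setting; the paper's route is shorter once one has the vertex lemma in hand. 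Your explicit care in the converse direction---choosing the witness ray in the first-coordinate copy of $N_{\RR}$ so that $\langle u,-\rangle$ is negative in the top Archimedean class---is a genuine improvement in clarity over the paper's terse ``for $r\gg0$'', which as written does not obviously dominate an arbitrary $\nu(a_{u})\in\RR^{(k)}$. Your explicit check of $T$-equivariance and your handling of the degenerate case $\mathrm{rec}_{0}(\sigma)=\emptyset$ are also additions not spelled out in the paper.
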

\begin{proof}
\tyler{The proof {\em ABSOLUTELY HAS TO} use Proposition \ref{proposition: recession complex behave well}.(ii)... I think our use of this result is kind of hidden in the present proof...}
If $a_u\chi^u\in R[M]$ is a monomial is $R[M]^P$, then it satisfies the inequality
	$$
	\nu(a_u)+\langle u,v\rangle\ \geq\ 0
	\ \ \ \ \ \mbox{for all}\ \ \ \ 
	v\in P.
	$$
If $u\notin S_{\mathrm{rec}_{0}(P)}$, then there exists some $v'\in\mathrm{rec}_{n}(P)$ such that $\langle u,v'\rangle<0$. Thus, for some choice of $r\gg0$, we have $\nu(a_u)+\langle u,v+rv'\rangle<0$. Choose a presentation of $P$, as the intersection of halfspaces $\langle v,u_i\rangle \geq \gamma_i$. Then, $\mathrm{rec}_{n}(P)$ is given by replacing each $\gamma_i$ with $0$. From the linearity of the pairing, it follows immediately that the polyhedron $P$ is closed under translation by points in $\mathrm{rec}_{n}(P)$. It follows that $a_u = 0$, and $R[M]^{P\!}\otimes_{R}K=K\big[S_{\mathrm{rec}_{n}(P)}\big]$.
\end{proof}

\fi

\begin{lemma}\label{prop: faces-glue}
If $Q$ is a face of a $\Gamma$-rational polyhedron $P$ in $N_{\RR^{(k)}}$, then the inclusion $Q\hookrightarrow P$ induces a $\slantbox[.6]{$\mathcal{T}\ $}$-equivariant inclusion $\mathscr{U}\!(Q)\hookrightarrow\mathscr{U}\!(P)$ that identifies $\mathscr{U}\!(Q)$ with a distinguished affine open inside $\mathscr{U}\!(P)$.
\end{lemma}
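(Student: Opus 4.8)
The strategy is to reduce to the classical statement about distinguished opens in affine toric varieties, and then to transport that statement across the valuation-ring structure. First I would recall that if $Q$ is a face of $P$, then $Q$ is cut out inside $P$ by setting one (or more) of the defining inequalities of $P$ to equality. Concretely, write $P = H^{\ge 0}_{(u_1,\gamma_1)}\cap\cdots\cap H^{\ge 0}_{(u_m,\gamma_m)}$ inside $N_{\RR^{(k)}}$, and suppose $Q$ is obtained by replacing $H^{\ge 0}_{(u_1,\gamma_1)}$ by $H^{0}_{(u_1,\gamma_1)}$ (the general case is the same up to reindexing and iterating, since intersecting faces iteratively is harmless). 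The key combinatorial observation is then: a monomial $a_u\chi^u \in K[M]$ lies in $R[M]^Q$ if and only if there exists an integer $\ell \ge 0$ such that $a_u\chi^u \cdot (a_{u_1}\chi^{u_1})^{-\ell}$ lies in $R[M]^P$, where $a_{u_1}\in K$ is a chosen scalar with $\nu(a_{u_1}) = -\gamma_1$ (such a scalar exists since $\gamma_1 \in \Gamma = v(K^\times)$). In other words, $R[M]^Q$ is the localization of $R[M]^P$ at the single element $f := a_{u_1}\chi^{u_1}$.

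The verification of this localization claim is the technical heart of the argument, and is where I expect the main (though modest) obstacle to lie. One inclusion is easy: $f = a_{u_1}\chi^{u_1}$ satisfies $\langle u_1, v\rangle + \nu(a_{u_1}) = \langle u_1,v\rangle - \gamma_1 \ge 0$ for all $v\in P$, so $f \in R[M]^P$; and on $Q$ this functional is identically zero, so $f$ is a unit in $R[M]^Q$, giving a ring map $R[M]^P_f \to R[M]^Q$. For the reverse direction, take a monomial $a_u\chi^u\in R[M]^Q$, so $\langle u,v\rangle + \nu(a_u)\ge 0$ for all $v\in Q$. The point is that $P$ is covered, up to translation in the cone direction $\mathrm{rec}(H^0_{(u_1,\gamma_1)})$, by $Q$: since $u_1$ is nonnegative on all of $P$ and vanishes on $Q$, for $v\in P$ the point $v' = v - t w$ (for a suitable ray direction $w$ with $\langle u_1, w\rangle > 0$ and a suitable scalar $t$) moves into $Q$ — more precisely the standard face argument shows $P + \RR^{(k)}_{\ge 0}\cdot w$ retracts onto $Q$ along this direction, so $\langle u, v\rangle + \nu(a_u)$ differs from its value on the corresponding point of $Q$ by a nonnegative multiple of $\langle u_1, w\rangle$. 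Absorbing these multiples into powers of $f$ yields $a_u\chi^u \in R[M]^P_f$. A clean way to organize this, paralleling the rank-one argument, is to first reduce to $P$ pointed via Lemma \ref{lemma: passage to pointed polyhedron}, then intersect over vertices as in the proof of Proposition \ref{prop: flat, integral domain, integrally closed}, reducing to the one-vertex case where the statement is the familiar fact that $S_Q = S_P + \ZZ_{\ge 0}\cdot(-u_1)$ as monoids (here $S_P := S_{\mathrm{rec}_0(P)}$ and $u_1$ generates the face of $S_P^\vee$ dual to the facet containing $Q$).

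Granting the localization identification $R[M]^Q \cong R[M]^P_f$, the lemma follows formally. Localization at a single element corresponds on spectra to an open immersion onto a distinguished affine open $D(f)\subset \mathscr{U}(P)$, which is precisely the assertion that $\mathscr{U}(Q)\hookrightarrow \mathscr{U}(P)$ is an open immersion with distinguished affine image. The $\slantbox[.6]{$\mathcal{T}\ $}$-equivariance is immediate: the localizing element $f = a_{u_1}\chi^{u_1}$ is a scalar multiple of a monomial, hence a $\slantbox[.6]{$\mathcal{T}\ $}$-semi-invariant, so the comultiplication $R[M]^P \to R[M]\otimes_R R[M]^P$ from Remark \ref{remark: the R-torus, etc} descends to the localization compatibly, making the inclusion $\mathscr{U}(Q)\hookrightarrow\mathscr{U}(P)$ a map of $\slantbox[.6]{$\mathcal{T}\ $}$-schemes. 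Finally, for a general face one writes $Q$ as obtained from $P$ by promoting several facets to hyperplanes; iterating the single-facet case (each step localizes at one semi-invariant) exhibits $\mathscr{U}(Q)$ as a distinguished open in $\mathscr{U}(P)$, since a localization of a localization at elements of the ring is again a localization at a single element (a product).
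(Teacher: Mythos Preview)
Your proposal is correct and follows essentially the same approach as the paper, which does not give an argument at all but simply asserts that the proof is analogous to the classical case and cites \cite[Proposition~6.12]{Gub13}. Your sketch of the localization argument $R[M]^{Q}\cong \big(R[M]^{P}\big)_{f}$ with $f=a_{u_{1}}\chi^{u_{1}}$ is exactly the content of Gubler's rank-$1$ proof, and you have supplied considerably more detail than the paper itself.

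One remark worth keeping in mind as you write this up carefully: in the step where you choose $\ell$ so that $f^{\ell}\cdot a_{u}\chi^{u}\in R[M]^{P}$, the usual ``take $\ell$ large enough'' argument relies on an Archimedean comparison, which is not automatic in $\RR^{(k)}$ with its lexicographic order. In practice the issue does not arise, because after reducing to the pointed case the finitely many vertices of $P$ lie in $N\otimes_{\ZZ}\Gamma_{\QQ}$ and the relevant inequalities compare values of the \emph{same} integral linear forms $u$ and $u_{1}$ on the differences $w_{j}-w_{i}$; one then checks directly that a single integer $\ell$ suffices for all vertices simultaneously. Your alternative organization via the vertex decomposition $R[M]^{P}=\bigcap_{w}R[M]^{\{w\}}$ from Proposition~\ref{prop: flat, integral domain, integrally closed} is a clean way to isolate this verification.
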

\begin{proof}
The proof is similar to the proof of the analogous result in toric $K$-varieties. See for instance~\cite[Proposition 6.12]{Gub13}.
\end{proof}

\begin{definition}
	The {\em model} of a $\Gamma\!$-rational polyhedral complex $\mathcal{P} \subset N_{\RR^{(k)}}\!$ is the $R$-scheme
	$$
	\mathscr{Y}\!(\mathcal{P})\ :=\ {\varinjlim}_{P\in\mathcal{P}}\mathscr{U}\!(P),
	$$
glued via Lemma \ref{prop: faces-glue}. If $\Sigma$ is a $\Gamma$-admissible fan in $N_{\RR^{(k)}}\!\times\!\mathcal{E}$, then we let $\mathscr{Y}\!(\Sigma)$ denote the model $\mathscr{Y}\!\big(\mathrm{rec}_{0}(\Sigma)\big)$ of $\Sigma$'s $0^{\mathrm{th}}$-recession complex.
\end{definition}

\begin{example}\label{example: 2-stage degeneration}
{\bf A $\pmb{2}$-stage degeneration of $\pmb{\PP^1}$.}
Let $K$ be the Hahn series field $\CC\llbracket \RR^{(2)}\rrbracket$, and let $N$ be a rank-$1$ lattice $N\cong\ZZ$. Consider the following polyhedral decomposition $\mathcal P$ of  $\RR^{(2)}$:
\begin{enumerate}[(1)]
\item The vertices are given by $(0,-1), (0,1)$, and $(1,0)$. 
\item The $1$-dimensional polyhedral are the intervals $(-\infty,(0,-1)]$, $[(0,-1),(0,1)]$, $[(0,1),(1,0)]$ and $[(1,0),\infty)$. 
\end{enumerate}
	This decomposition is the $0^{\mathrm{th}}$-recession complex $\mathrm{rec}_0(\Sigma)$ of a $\ZZ^{(2)}$-rational fan $\Sigma$ in $N_{\RR^{(2)}}\!\times\!\mathcal{E}$ given by the cone over $\mathcal P$ (see Figure \ref{figure: recession complexes} ).
	\begin{figure}[h!]    
	$$
	\scalebox{.85}{$
	\begin{xy}
	(0,17.5)*+{\includegraphics[scale=.45]{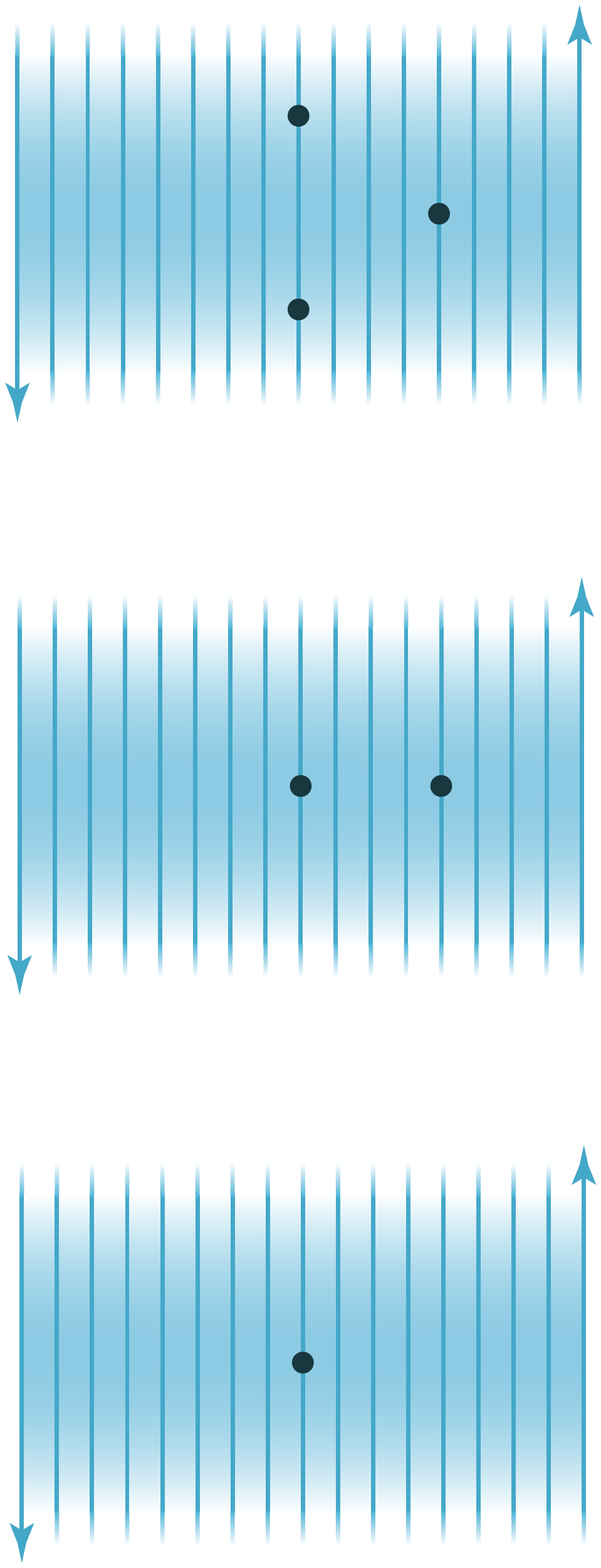}};
	(0,-1)*+{\mathrm{rec}_{2}(\Sigma)};
	\end{xy}
	\ \ \ \ \ \ \ \ \ \ \ 
	\begin{xy}
	(0,17.5)*+{\includegraphics[scale=.45]{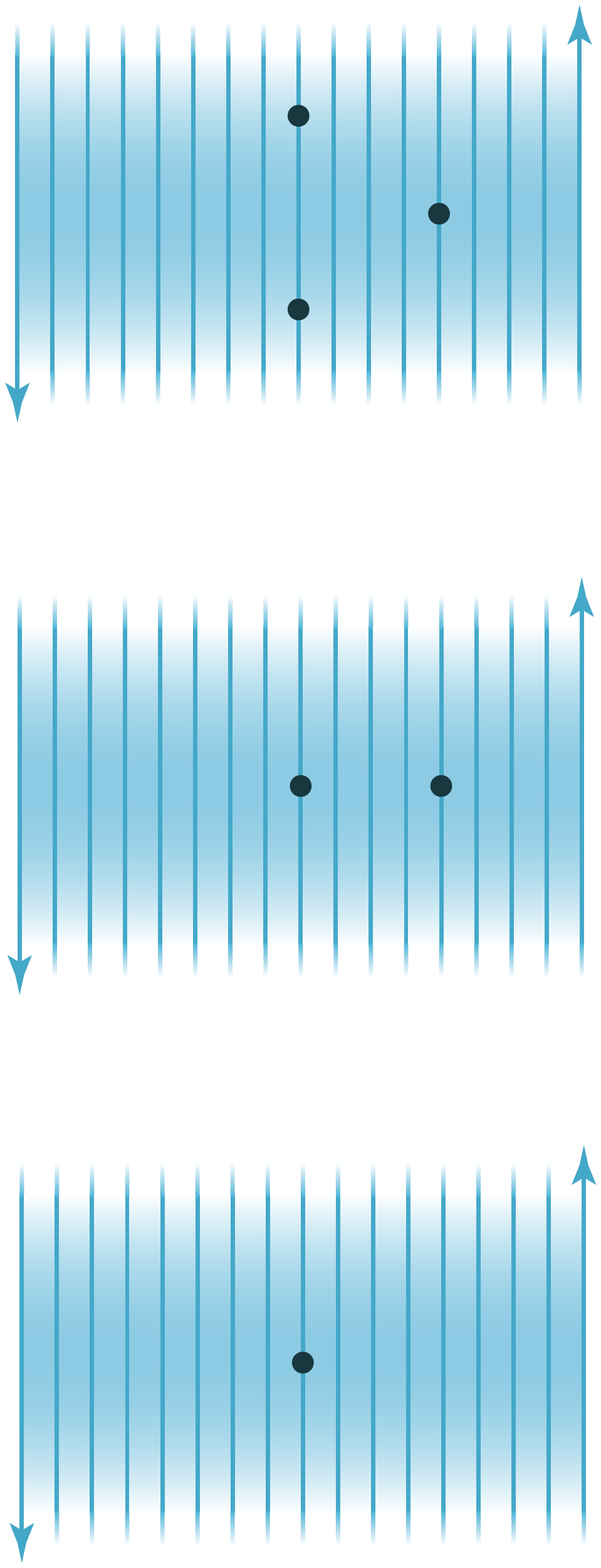}};
	(1.45,15.75)*+{\mbox{\smaller a}};
	(11.75,15.75)*+{\mbox{\smaller b}};
	(0,-1)*+{\mathrm{rec}_{1}(\Sigma)};
	\end{xy}
	\ \ \ \ \ \ \ \ \ \ \ 
	\begin{xy}
	(0,17.5)*+{\includegraphics[scale=.45]{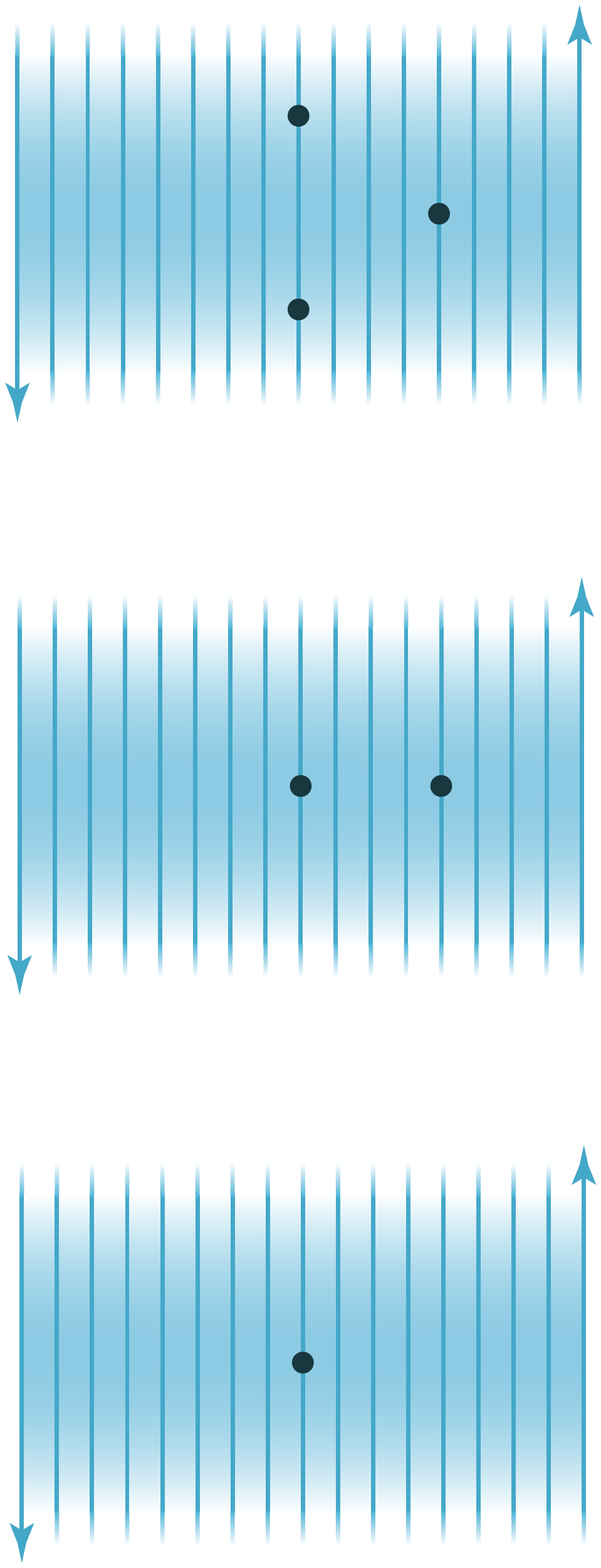}};
	(1.45,8.5)*+{\mbox{\smaller c}};
	(1.45,26.75)*+{\mbox{\smaller d}};
	(11.75,15.75)*+{\mbox{\smaller e}};
	(0,-1)*+{\mathrm{rec}_{0}(\Sigma)};
	\end{xy}
	$}
	$$\vskip -.25cm
	\caption{The three recession fans of the $\ZZ^{(2)}$-rational fan $\Sigma$ in Example \ref{example: 2-stage degeneration}.}
	\label{figure: recession complexes}    
	\end{figure}
The special fiber of the model $\mathscr Y(\Sigma)$ consists of $3$ $\PP^1$'s glued in a chain (see Figure \ref{figure: 2-stage degeneration}). As in Figure \ref{figure: recession complexes}, the first recession complex $\mathrm{rec}_1(\Sigma)$ has two vertices $(0,-1)$ and $(0,1)$, and correspondingly, the intermediate fiber consist of two $\PP^1$'s glued at a single point, whereas the second recession complex $\mathrm{rec}_2(\Sigma)$ is the fan of $\PP^1$ over the value group $\RR^{(2)}$.
	\begin{figure}[h!]    
	$$
	\scalebox{.85}{$
	\begin{xy}
	(0,-.5)*+{\includegraphics[scale=.65]{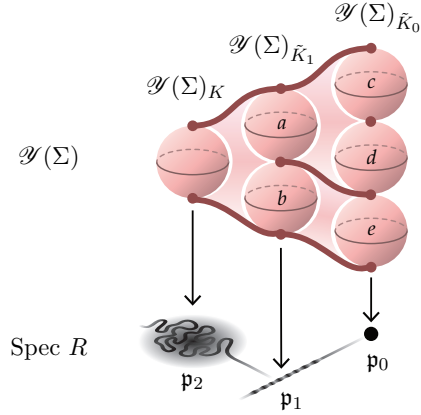}};
	(-35,-20)*+{\Spec_{\ \!}R}="2";
	(-35,10)*+{\mathscr{Y}(\Sigma)}="1";
	(1.5,15.25)*+{\mbox{\smaller\smaller a}};
	(1.5,3.75)*+{\mbox{\smaller\smaller b}};
	(15.5,21.75)*+{\mbox{\smaller\smaller c}};
	(15.5,10.25)*+{\mbox{\smaller\smaller d}};
	(15.5,-1.25)*+{\mbox{\smaller\smaller e}};
	(-12.5,-25)*+{\mathfrak{p}_{2}};
	(3,-28)*+{\mathfrak{p}_{1}};
	(16.5,-22)*+{\mathfrak{p}_{0}};
	(-13.5,21)*+{\mathscr{Y}(\Sigma)_{K}};
	(0,27)*+{\mathscr{Y}(\Sigma)_{\tilde{K}_{1}}};
	(16.5,32)*+{\mathscr{Y}(\Sigma)_{\tilde{K}_{0}}};
	\end{xy}
	$}\ \ \ \ \ \ \ \ \ \ \ \ \ \ \ \ 
	$$\vskip -.25cm
	\caption{The model $\mathscr Y(\Sigma)$ from Example \ref{example: 2-stage degeneration}. Thick red lines represent horizontal, torus-invariant divisors in the intermediate special fibers.}
	\label{figure: 2-stage degeneration}
	\end{figure}
\end{example}

\begin{example}{\bf Toric degenerations from toric morphisms.}
A large class of interesting examples of multistage toric degenerations, in the spirit of those in~\cite{NS06}, may be formed by the following construction. Consider a separated, flat, equivariant morphism of toric varieties $Y(\Sigma)\to U_\sigma$, where $U_\sigma$ is an affine toric variety $\Spec(K[S_\sigma])$. Choose a flag $F_\bullet$ of faces
\[
\{0\}=\sigma_0\subset \sigma_1\subset \sigma_2\subset\cdots\subset \sigma_k,
\]
where $\dim \sigma_k = k$. This determines a descending flag of orbit closures 
\[
Z_0\subset Z_{k-1}\subset\cdots Z_{0} = U_\sigma.
\]
There is a sequence of characters $\chi_1,\chi_{2},\ldots, \chi_k$, such that $\chi_i$ cuts out $Z_i$ inside $Z_{i+1}$. This determines a valuation
\[
v_{F_\bullet}:K[U_\sigma]\to \ZZ^{(k)}\hookrightarrow \RR^{(k)},
\]
obtained by sequentially measuring order of vanishing of a function along the flag of torus orbits, see for instance~\cite[Section 2.1]{Ok96}. Note that any valuation of this form $v_{F_\bullet}$ is naturally a point of the Hahn analytic space $U_\sigma^\fH$, in the sense of~\cite{FR1}. Passing to the associated valuation ring $R$ and base changing the morphism $Y(\Sigma)\to U_\sigma$ we obtain a multistage toric degeneration
\[
\mathscr Y\to \spec(R). 
\]

	This degeneration can also be obtained using a polyhedral geometry construction. To see this, we observe that the point $v_{F_\bullet}$ determines a point of $p_{F_\bullet}\in \Hom(S_\sigma, \RR^{(k)})$. By Remark~\ref{rmk: fans}, any morphism of fans $\Sigma\to \sigma$ is equivalent to the data of the corresponding morphism of fans over $\RR^{(k)}$, which we continue to denote $\Sigma\to \sigma$. The fiber in $\Sigma$ over $p_{F_\bullet}$ inherits the structure of a $\ZZ^{(k)}$-rational polyhedral complex over $\RR^{(k)}$ in $N_{\RR^{(k)}}$. Translating this polyhedral complex, we obtain an admissible fan in the sense of the previous section and hence a toric degeneration. The toric degeneration coincides with $\mathscr Y$ by an explicit calculation of the corresponding monoid rings. 
\end{example}

\begin{proof}[{\textit Proof of part {\bf (iii)} of the Main Theorem:}]
It suffices to consider the affine case, i.e. to consider the case where $\mathcal P$ is a single polyhedron. Let $P$ be a polyhedron with recession cone $\sigma$ and let $R[M]^P$ be the associated tilted algebra. Consider the $K$-algebra $R[M]^P\otimes_R K$. Observe that the polyhedron $P$ is closed under the addition of points of its recession cone $\sigma$. It follows that any Laurent polynomial satisfying the given inequalities for all $v\in P$ must be supported in $\sigma^\vee$. We conclude $R[M]^P\otimes K\subset K[S_\sigma]$, so we must prove the reverse inclusion. Given a Laurent polynomial $\sum a_u \chi^u$, the minimum over $v\in P$ of the affine functional $\nu(a_u)+\langle u,v\rangle$ must attain a minimum at a vertex of $P$. We may find an element $\lambda\in K$ such that $\lambda\cdot (\sum a_u\chi^u)$ lies in $R[M]^P$, and the desired inclusion follows. 
\end{proof}

\begin{definition}\label{definition: weight function}
	For each $\Gamma$-rational polyhedron $P$ in $N_{\RR^{(k)}}$, the {\em weight function} associated to $P$ is the map $\nu_{P}:K\big[S_{\mathrm{rec}_{0}(P)}\big]\lra\RR^{(k)}$ taking
	$$
	 \sum_{u\in M} a_u\chi^u\ \longmapsto\  \inf_{w\in P}\big\{\nu(a_u)+\langle u,w\rangle\big\}.
	$$
\end{definition}

\begin{remark}
	We point out that $\nu_P$ is a natural object in the study of higher rank valued geometry -- it is a point of the Hahn analytic space associated to $K\big[S_{\mathrm{rec}_{0}(P)}\big]$ ~\cite{FR1}.
\end{remark}

\begin{lemma}\label{lem: v-P-lemma}
For regular functions $f,g\in K\big[S_{\mathrm{rec}_{0}(P)}\big]$ and scalar $a\in K$, the weight function $\nu_P$ satisfies the following properties.
\begin{enumerate}[(W1)]
\item The weight function is supmultiplicative: $\nu_P(f\cdot g) \geq \nu_P(f)+\nu_P(g)$.
\item The weight function is power-multiplicative: $\nu_P(f^m) = m\cdot \nu_P(f)$.
\item The weight function is linear for $K$-scalars: $\nu_P(a \cdot f) = \nu(a)+\nu_P(f)$.
\end{enumerate}
\end{lemma}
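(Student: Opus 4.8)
The plan is to extract (W1) and (W3) directly from the definition and the ultrametric inequality, and to obtain (W2) by recognizing $\nu_P$ as a pointwise minimum of genuine valuations. It is convenient to set, for an arbitrary point $w\in N_{\RR^{(k)}}$ and $h=\sum_u c_u\chi^u$, the quantity $\nu_{\{w\}}(h):=\min_u\bigl(\nu(c_u)+\langle u,w\rangle\bigr)$; by the definition of the weight function $\nu_P(h)=\inf_{w\in P}\nu_{\{w\}}(h)$, and for a vertex $v$ of $P$ this $\nu_{\{v\}}$ is exactly the monomial assignment shown to be a valuation in the proof of Proposition \ref{prop: flat, integral domain, integrally closed}. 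Property (W3) is then immediate: since $\nu$ is a valuation, $\nu_{\{w\}}(af)=\nu(a)+\nu_{\{w\}}(f)$ for every $w$, and translation by the fixed element $\nu(a)\in\RR^{(k)}$ is order-preserving, hence commutes with the infimum over $w\in P$. For (W1), for each fixed $w$ the ultrametric inequality for $\nu$ together with bilinearity of the pairing $\langle-,-\rangle$ gives $\nu_{\{w\}}(fg)\geq\nu_{\{w\}}(f)+\nu_{\{w\}}(g)$; taking the infimum over $w\in P$ and using $\inf(F+G)\geq\inf F+\inf G$ for $\RR^{(k)}$-valued functions yields $\nu_P(fg)\geq\nu_P(f)+\nu_P(g)$. (Throughout one assumes $f,g\neq 0$ and $a\in K^{\times}$; the excluded cases hold by the usual conventions for $+\infty$.)

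For (W2), the inequality $\nu_P(f^m)\geq m\,\nu_P(f)$ follows from (W1) by induction on $m$, so the issue is the reverse inequality. I would first reduce to the case that $P$ is pointed: if $V$ is the largest linear subspace of $P$, then Lemma \ref{lemma: passage to pointed polyhedron} identifies $R[M]^P$ with $R[V^{\perp}]^{P/V}$, and under this identification $\nu_P$ is carried to $\nu_{P/V}$, since every monomial that occurs has exponent pairing to $0$ against $V$, so $\langle u,w\rangle$ depends only on the class of $w$ modulo $V$; and $P/V$ is pointed. For pointed $P$, I then claim $\nu_P=\min_v\nu_{\{v\}}$, the minimum over the finitely many vertices $v$ of $P$. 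Since the exponents of $f$ all lie in $S_{\mathrm{rec}_0(P)}$, each of the $\Gamma$-affine functionals $w\mapsto\nu(a_u)+\langle u,w\rangle$ is bounded below on $P$ and attains its minimum over $P$ at a vertex (as in the proof of Proposition \ref{prop: flat, integral domain, integrally closed}); taking the minimum over the exponents $u$ gives $\nu_{\{w\}}(f)\geq\min_v\nu_{\{v\}}(f)$ for all $w\in P$, and the reverse inequality is clear, so $\nu_P(f)=\inf_{w\in P}\nu_{\{w\}}(f)=\min_v\nu_{\{v\}}(f)$.

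Granting the vertex formula, (W2) follows from the elementary fact that a pointwise minimum of valuations is power-multiplicative: choosing a vertex $v_0$ with $\nu_P(f)=\nu_{\{v_0\}}(f)$, and using that $\nu_{\{v_0\}}$ is a valuation and hence multiplicative, one gets $\nu_P(f^m)\leq\nu_{\{v_0\}}(f^m)=m\,\nu_{\{v_0\}}(f)=m\,\nu_P(f)$, which together with the $\geq$ direction proves (W2). (In fact, given the vertex formula, (W1) and (W3) can be re-derived in one line each, since they hold for each $\nu_{\{v\}}$ and pass to minima; but the direct arguments above are shorter.)

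The main obstacle is the $\leq$ half of (W2), and specifically the passage from the infimum over all of $P$ defining $\nu_P$ to a finite minimum over the vertices of $P$. This requires the polyhedral bookkeeping above --- that the relevant $\Gamma$-affine functionals are bounded below on $P$ and minimized at vertices, which rests on the hypothesis that the exponents of $f$ lie in $S_{\mathrm{rec}_0(P)}$, together with the decomposition of a pointed $\Gamma$-rational polyhedron as the Minkowski sum of the convex hull of its vertices with its recession cone --- as well as the input, borrowed from the proof of Proposition \ref{prop: flat, integral domain, integrally closed}, that the individual monomial assignments $\nu_{\{v\}}$ are bona fide valuations. Everything else is formal, and power-multiplicativity, as opposed to mere supermultiplicativity, enters only through the remark that a pointwise minimum of valuations is power-multiplicative.
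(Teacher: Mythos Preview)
Your proof is correct and follows essentially the same route as the paper: reduce to the pointed case via Lemma~\ref{lemma: passage to pointed polyhedron}, express $\nu_P$ as the minimum of the monomial valuations $\nu_{\{v\}}$ over the vertices of $P$, and invoke that each $\nu_{\{v\}}$ is a genuine valuation. The only organizational difference is that you first give direct arguments for (W1) and (W3) using the infimum definition, and reserve the vertex reduction for the $\leq$ half of (W2), whereas the paper performs the vertex reduction at the outset and then reads off all three properties from the valuation axioms; your version is a bit more detailed but not substantively different.
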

\begin{proof}
As before, we may quotient $N_{\RR^{(k)}}$ by the largest linear subspace contained in $\sigma_P$, and thus assume that $P$ is pointed. For pointed $P$, every $\RR^{(k)}$-valued $\Gamma$-affine functional on $N_{\RR^{(k)}}$ takes its minimum at a vertex of $P$. Thus, in Definition \ref{definition: weight function} above, we have 
	$$
	\inf_{w\in P}\big\{\nu(a_u)+\langle u,w\rangle\big\}
	\ \  =\ \ 
	\underset{w\ \textnormal{of}\ P}{\underset{\textnormal{vertices}}{\min}}\big\{\nu(a_u)+\langle u,w\rangle\big\}.
	$$
For each vertex $w$ of $P$, the standard argument shows that $\nu_w$ is a valuation on $K[S_{\mathrm{rec}_0(P)}]$, see for instance~\cite[Proposition 2.1.2]{Ked}. The claims (W1) through (W3) follow.
\end{proof}

\begin{theorem}\label{theorem: the big'un}
	There is a natural bijection between the vertices of the $(k-j_{i})^{\mathrm{th}}$-recession polyhedron $\mathrm{rec}_{k-j_{i}}(P)$ and the components of the $i^{\mathrm{th}}$-intermediate fiber of the $R$-scheme $\mathscr U_P$. The reduced induced structure of the component corresponding to a vertex $w\in\mathrm{rec}_{k-j_{i}}(P)$ is given by the vanishing of all functions $f$ in the set
	$$
	\big\{f\in R_{i}\!\otimes_{R}\!R[M]^P: pr^{k}_{k-j_{i}}\big(\nu_w(f)\big)>0\big\}.
	$$
The reduced component corresponding to $w$ is equivariantly isomorphic to the toric $\overline{K}_{i}$-variety given by the star of $w$ in $\mathrm{rec}_{k-j_{i}}(P)$.
	
	If $K$ is algebraically closed, then all intermediate fibers are reduced. 
\end{theorem}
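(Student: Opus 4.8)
The plan is to reduce the statement to a computation with the monomial valuations $\nu_w$, then read off the structure of the intermediate fibers by analyzing prime ideals of $R[M]^P$ cut out by the relevant positivity condition. First I would note, via Lemma~\ref{lemma: passage to pointed polyhedron}, that it suffices to treat the case where $P$ is pointed, so that every $\Gamma$-affine functional on $N_{\RR^{(k)}}$ attains its minimum at a vertex of $P$ and $R[M]^P = \bigcap_{w} R[M]^{\{w\}}$ over vertices $w$. The key object is the map $\pr^k_{k-j_i}\colon \RR^{(k)}\to\RR^{(k-j_i)}$, whose composition with $\nu_w$ detects whether a function lies in $\mathfrak{p}_i\otimes_R R[M]^P$ after base change to $R_i$: the point is that $R_i\otimes_R R[M]^{\{w\}}$ is the valuation ring of the composed valuation $v_i\circ\nu_w = \pr^k_{k-j_i}\circ\nu_w$ on $K[S_{\mathrm{rec}_0(P)}]$, with maximal ideal exactly $\{f : \pr^k_{k-j_i}(\nu_w(f))>0\}$ (here one uses Lemma~\ref{lem: v-P-lemma} to know $\nu_w$ is genuinely a valuation). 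Thus the claimed set is a prime ideal, and the quotient is an integral domain, so the corresponding closed subscheme is indeed irreducible.

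The second step is to identify these prime ideals as exactly the generic points of the irreducible components of $\mathscr{U}_P\otimes_R R_i$, specialized to the $i^{\mathrm{th}}$ fiber. Here I would use the description $R_i\otimes_R R[M]^P = \bigcap_w R_i\otimes_R R[M]^{\{w\}}$ and observe that two vertices $w, w'$ give the same prime ideal precisely when $\pr^k_{k-j_i}\circ\nu_w = \pr^k_{k-j_i}\circ\nu_{w'}$, i.e.\ when $\langle u, w-w'\rangle$ lands in $\Delta_i$ (equivalently $\RR^{(j_i)}$) for all $u\in M$; but that is exactly the condition that $w$ and $w'$ map to the same point of $\mathrm{rec}_{k-j_i}(P)$, using the commutative square of Lemma~\ref{lem: embeddedquotients} and Proposition~\ref{proposition: recession complex behave well}. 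This establishes the natural bijection between vertices of $\mathrm{rec}_{k-j_i}(P)$ and the components. To go from ``these are the minimal primes over $\mathfrak p_i$'' to ``these are the components of the $i^{\mathrm{th}}$ fiber'' one uses that passing to $\widetilde K_i = R_i/\mathfrak p_i$ is passing to the closed fiber of $\Spec R_i$, together with the flatness from Proposition~\ref{prop: flat, integral domain, integrally closed}, so the fiber is cut out by a single scheme-theoretic equation and its components are the images of the components of $\mathscr{U}_P\otimes_R R_i$ meeting the closed point.

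The third step is the identification of the reduced component with the toric $\widetilde K_i$-variety of the star of $w$ in $\mathrm{rec}_{k-j_i}(P)$. For this I would compute the quotient $R[M]^P / (\text{the given ideal}) \otimes \widetilde K_i$ directly in the pointed-at-$w$ local model: monomials $a_u\chi^u$ with $\pr^k_{k-j_i}(\nu_w(a_u\chi^u)) = 0$ survive, those with $>0$ die, and those with a negative value do not occur; writing $\langle u, w\rangle + \nu(a_u)$ in coordinates, the surviving monomials are governed by the cone dual to the star of $w$ in $\mathrm{rec}_{k-j_i}(P)$, and the coefficients reduce to $\widetilde K_i$. This is a direct generalization of the rank-$1$ computation in Gubler's work~\cite{Gub13}; I would cite that as a model and give the coordinate bookkeeping. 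The equivariance for the $\slantbox[.6]{$\mathcal{T}\ $}$-action is automatic from Remark~\ref{remark: the R-torus, etc} since the ideal is monomial.

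Finally, the algebraically closed case: when $K=\overline K$, the value group $\Gamma$ is divisible, every vertex of $P$ is $\Gamma$-rational, and by the argument of Proposition~\ref{prop: finite-pres} the local model is finitely presented with the scalars $a_i$ chosen so that $\nu_w(a_i\chi^{u_i})=0$ on the nose. One then checks that the ideal $\{\pr^k_{k-j_i}(\nu_w(f))>0\}$ is radical: modulo it, the surviving generators $a_i\chi^{u_i}$ with value having vanishing $\pr^k_{k-j_i}$-part already form the coordinate ring of a reduced toric variety over $\widetilde K_i$, and no nilpotents are introduced because the valuation $\pr^k_{k-j_i}\circ\nu_w$ takes values in a divisible, hence non-discrete but still ``saturated,'' subgroup — the same mechanism that makes special fibers reduced over algebraically closed rank-$1$ valuation rings in Theorem~\ref{theorem: Gubler's Theorem}. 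The main obstacle I anticipate is the bookkeeping in the second step: carefully matching the equivalence relation ``same $\pr^k_{k-j_i}\circ\nu_w$'' on vertices of $P$ with the combinatorial collapse $P \rightsquigarrow \mathrm{rec}_{k-j_i}(P)$, and checking that distinct vertices of $\mathrm{rec}_{k-j_i}(P)$ genuinely give distinct (incomparable) primes rather than one containing another, so that we really get the components and not merely a subset of them. Everything else is a rank-$k$ transcription of standard rank-$1$ toric-degeneration arguments.
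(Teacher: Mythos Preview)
Your proposal is essentially the paper's own argument, organized slightly differently. Both reduce the $i^{\mathrm{th}}$ intermediate fiber to the special fiber over the rank-$(k-j_i)$ valuation ring obtained from the composite $K^\times\to\Gamma\to\Gamma/\Delta_i\hookrightarrow\RR^{(k-j_i)}$, both identify the vertices of $\mathrm{rec}_{k-j_i}(P)$ with the vertices of the projection $\pi^k_{k-j_i}(P)$ via Lemma~\ref{lem: embeddedquotients}, and both finish by the star computation over the residue field; the only real difference is that you establish primality of the ideal $\{f:\pr^k_{k-j_i}(\nu_w(f))>0\}$ directly from the fact that $\nu_w$ is a valuation, whereas the paper instead computes the radical $\sqrt{\mathfrak{p}_iR_{\mathfrak{p}_i}[M]^P}$ explicitly using the power-multiplicativity property (W2) of the weight function $\nu_P$ from Lemma~\ref{lem: v-P-lemma} and a monomial-by-monomial check that picks $a\in\mathfrak{p}_i$ with $\nu(a)<L\cdot\nu_{\mathrm{rec}_{k-j_i}(P)}(f)$ to force $f^L$ into the ideal. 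Your route is marginally cleaner on the primality side; the paper's route makes the reducedness in the algebraically closed case immediate (one simply takes $L=1$ using divisibility of $\Gamma$), which is a step you gesture at but would need to spell out.
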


\begin{proof}
The $i^{\mathrm{th}}$ intermediate fiber of the polyhedral model $\mathscr U_P$ is isomorphic to the special fiber of the $R_{\fp_i}$-model $\mathscr U_P\otimes R_{\fp_i}$. In turn, $R_{\fp_i}$ is the ring of nonnegative elements for the composite valuation
	$$
	K^\times\ \lra\!\!\!\!\rightarrow\ \Gamma\ \lra\!\!\!\!\rightarrow\ \Gamma\big/\Delta_{i}.
	$$
It is also the ring of nonnegative elements for the composition
\[
K^\times\ \lra\!\!\!\!\rightarrow\ \Gamma \hookrightarrow \RR^{(k)}\xrightarrow{{\rm pr}^k_{k-j_i}} \RR^{(k-{j_i})}. 
\]	
For $i = 0$, the recession polyhedron is the recession cone and the of $P$, and the fiber is the generic fiber of the model. In this case the claim is clear, so we assume $j\geq 1$. From Lemma~\ref{lem: embeddedquotients}, the vertices of recession polyhedron $\mathrm{rec}_{k-j_{i}}(P)$ are in bijection with the vertices of the image of $P$ under the natural projection
	$$
	\pi^k_{k-j_{i}}\ \!:\ N_{\RR^{(k)}}\ \lra\!\!\!\!\rightarrow\ N_{\RR^{(k-j_{i})}}
	$$
induced by the continuous projection $\mathrm{pr}^{k}_{j}:{\RR^{(k)}}\lra\!\!\!\!\rightarrow\RR^{(k-j)}$. To see the bijection between the vertices of $\mathrm{rec}_{k-j_{i}}(P)$ and components of the $j$th intermediate fiber, it now suffices to construct a bijection between the components of this fiber and the vertices of $\pi^k_{k-j_{i}}(P)$.

The fiber of the model $\mathscr U_P\otimes R_{\fp_i}$ above the closed point of $R_{\fp_i}$ is cut out by $I = \fp_i R_{\fp_i}[M]^P$ in $R_{\fp_i}[M]$. From Lemma~\ref{lem: v-P-lemma}, the weight function $\nu_P$ is power multiplicative, so the radical ideal $\sqrt{I}$ is contained in $J = \big\{f\in R_{i}\!\otimes_{R}\!R[M]^P: pr^{k}_{k-j_{i}}\big(\nu_w(f)\big)>0\big\}$. To see the reverse containment, first observe that since $J$ is $M$-graded, it suffices to check containment for every (possibly non-pure) monomial $\alpha\chi^u\in J$. Since $K$ has elements with valuation in the $j_i^{\mathrm{th}}$ Archimedean class, we may choose a scalar $a\in \fp_i$ such that $\nu(a)<L\cdot \nu_{\textrm{rec}_{k-j_i}(P)}(f)$ for some integer $L>0$. This proves that $f^L$ is contained in $I$, so $f\in \sqrt{I}$ as desired. If $K$ is algebraically closed, then we may choose $L = 1$ by taking $L^{\mathrm{th}}$ roots, so reducedness follows in this case. 

Finally, the isomorphism type of the component is given by the star of the corresponding vertex of $\mathrm{rec}_{k-j_{i}}(P)$. To see this, observe that the stars of $u$ and $\pi^k_{k-j_{i}}(u)$ yield isomorphic toric varieties. The result follows. 
\end{proof}

\begin{proof}[\textsc{Proof of parts {\bf (i)} and {\bf (ii)} of the Main Theorem:}]
Given a $\Gamma$-admissible fan $\Sigma$, the degeneration $\mathscr Y(\Sigma)$ is obtained by gluing the models associated to individual cones. For each of these cones, part (i) of the main theorem is an immediate consequence of Theorem~\ref{theorem: the big'un} above. The global version follows by gluing, using Lemma~\ref{prop: faces-glue}. The fact that the global model is separated follows from~\cite[Lemma 7.8]{Gub13}.  Part (ii) follows from Theorem~\ref{theorem: the big'un}.
\end{proof}
	
\bibliographystyle{siam}
\bibliography{GublerModels.bib}

\begin{thebibliography}{10}

\bibitem{Aroca10}
{\sc F.~Aroca}, {\em Krull-tropical hypersurfaces}, Annales de la facult{\'e}
  des sciences de Toulouse Math{\'e}matiques, 19 (2010), pp.~525--538.

\bibitem{AM69}
{\sc M.~F. Atiyah and I.~G. Macdonald}, {\em Introduction to commutative
  algebra}, vol.~2, Addison-Wesley Reading, 1969.

\bibitem{BPR}
{\sc M.~Baker, S.~Payne, and J.~Rabinoff}, {\em Nonarchimedean geometry,
  tropicalization, and metrics on curves}, Algebr. Geom., 3 (2016),
  pp.~63--105.

\bibitem{Ban13}
{\sc S.~Banerjee}, {\em Tropical geometry over higher dimensional local
  fields}, J. Reine Angew. Math. (Crelle's Journal),  (2013).

\bibitem{EP05}
{\sc A.~J. Engler and A.~Prestel}, {\em Valued fields}, Springer Monographs in
  Mathematics, Springer-Verlag, Berlin, 2005.

\bibitem{F15}
{\sc T.~Foster}, {\em Introduction to adic tropicalization}, in Proceedings of
  the Simons Symposium on Non-Archimedean and Tropical Geometry, To appear.

\bibitem{FP15}
{\sc T.~Foster and S.~Payne}, {\em {Limits of tropicalizations II: structure
  sheaves, adic spaces and cofinality of Gubler models}}, In preparation,
  (2015).

\bibitem{FR1}
{\sc T.~Foster and D.~Ranganathan}, {\em Hahn analytification and connectivity
  of higher rank tropical varieties}, Manuscripta Math.,  (To appear).

\bibitem{Ful93}
{\sc W.~Fulton}, {\em Introduction to toric varieties}, no.~131, Princeton
  University Press, 1993.

\bibitem{Gub13}
{\sc W.~Gubler}, {\em A guide to tropicalizations}, in Algebraic and
  combinatorial aspects of tropical geometry, vol.~589 of Contemp. Math., Amer.
  Math. Soc., Providence, RI, 2013, pp.~125--189.

\bibitem{GS13}
{\sc W.~Gubler and A.~Soto}, {\em Classification of normal toric varieties over
  a valuation ring of rank one}, Documenta Mathematica, 20 (2015),
  pp.~171--198.

\bibitem{Hahn}
{\sc H.~Hahn}, {\em {{\"U}ber die nichtarchimedischen Gr{\"o}{\ss}ensysteme}},
  in Hans Hahn Gesammelte Abhandlungen Band 1/Hans Hahn Collected Works Volume
  1, Springer, 1995, pp.~445--499.

\bibitem{Ked}
{\sc K.~S. Kedlaya}, {\em {$p$}-adic differential equations}, vol.~125 of
  Cambridge Studies in Advanced Mathematics, Cambridge University Press,
  Cambridge, 2010.

\bibitem{KKMSD}
{\sc G.~Kempf, F.~Knudsen, D.~Mumford, and B.~Saint-Donat}, {\em Toroidal
  embeddings {I}}, Lecture Notes in Mathematics, 339 (1973).

\bibitem{LQ11}
{\sc M.~Luxton and Z.~Qu}, {\em Some results on tropical compactifications},
  Trans. Amer. Math. Soc., 363 (2011), pp.~4853--4876.

\bibitem{Mum72}
{\sc D.~Mumford}, {\em An analytic construction of degenerating abelian
  varieties over complete rings}, Compos. Math., 24 (1972), pp.~239--272.

\bibitem{NS06}
{\sc T.~Nishinou and B.~Siebert}, {\em Toric degenerations of toric varieties
  and tropical curves}, Duke Math. J., 135 (2006), pp.~1--51.

\bibitem{NS11}
{\sc M.~Nisse and F.~Sottile}, {\em Non-archimedean coamoebae}, Tropical and
  Non-Archimedean Geometry (Holetown, 2013),  (2011), pp.~73--91.

\bibitem{Ok96}
{\sc A.~Okounkov}, {\em Brunn--minkowski inequality for multiplicities},
  Invent. Math., 125 (1996), pp.~405--411.

\bibitem{RG71}
{\sc M.~Raynaud and L.~Gruson}, {\em Criteres de platitude et de
  projectivit{\'e}}, Invent. Math., 13 (1971), pp.~1--89.

\bibitem{Tev07}
{\sc J.~Tevelev}, {\em Compactifications of subvarieties of tori}, American
  Journal of Mathematics, 129 (2007), pp.~1087--1104.

\end{thebibliography}

\end{document}